\documentclass[runningheads]{llncs}

\newif\ifincludeappendix
\includeappendixtrue

\usepackage[T1]{fontenc} %
\usepackage{color}

\usepackage{transparent}
\usepackage{hyperref} %
\usepackage{graphicx} %
\usepackage{cite}
\usepackage[edges]{forest} %
\usepackage{pgffor} %
\usepackage{csvsimple} %
\usepackage{underscore} %
\usepackage{listings} %
\usepackage[title]{appendix}
\usepackage{adjustbox}
\usepackage{tikz}
\usepackage{xspace}
\usepackage{wrapfig}
\usepackage{amsmath}
\usepackage{mathtools}
\usepackage{tikz}
\usepackage[ruled,linesnumbered]{algorithm2e}
\usepackage{algpseudocode}
\usepackage[acronym]{glossaries}
\usepackage{booktabs}
\usepackage{longtable}
\usepackage{pdflscape}
\usepackage{comment}
\usepackage{amssymb}
\usepackage{bbm}
\usepackage{mathrsfs}
\usepackage{blkarray}
\usepackage{pgf}
\usepackage[caption=false]{subfig}
\usepackage{rotating}
\usepackage{arydshln}
\usepackage{orcidlink}
\usepackage{fancyvrb} %
\usepackage{caption}
\usetikzlibrary{arrows.meta,positioning,fit,backgrounds} %

\newcommand{\aref}[1]{%
    \ifincludeappendix
        \hyperref[#1]{Appendix~\ref*{#1}}%
    \else
        the extended version of this paper~\cite{ref:delpi}%
    \fi
}

\SetKwInput{KwInput}{Input}
\SetKwInput{KwOutput}{Output}
\SetKwProg{InternalTry}{try}{:}{}
\SetKwProg{Catch}{catch}{:}{end}
\newcommand\Try[2]{
    \SetAlgoLined
    \InternalTry{#1}{
        \SetAlgoVlined
        #2}
    \SetAlgoVlined
}
\SetKwComment{Comment}{$\triangleright$\ }{}

\allowdisplaybreaks[1] %

\glsdisablehyper
\newglossaryentry{oop}{
    name={Object-Oriented Programming},
    first={Object-Oriented Programming (OOP)},
    text={OOP},
    description={
            Object-Oriented Programming is a programming paradigm based on the concept of objects, which can hold state in the form of fields and methods in the form of functions that can access the object's state.
        }
}
\newglossaryentry{api}{
    name = {Application Programming Interface},
    first = {Application Programming Interface (API)},
    text = {API},
    plural={APIs},
    firstplural={Application Programming Interfaces (APIs)},
    description = {
            An application programming interface (API) defines a way for two software components to communicate with each other.
        }
}
\newglossaryentry{smt}
{
    name={Satisfiability Modulo Theories},
    first={Satisfiability Modulo Theories (SMT)},
    text={SMT},
    description={
            A decision problem for logical formulas with respect to combinations of background theories
            expressed in classical first-order logic with equality.
        }
}
\newglossaryentry{fol}{
    name={First-order logic},
    first={first-order logic (FOL)},
    text={fol},
    description={
            Expressive formal system that allows the following statements:
            \begin{itemize}
                \item Constants ($x, y$)
                \item Functions ($f(x)$)
                \item Relationships ($x > y$)
                \item Connectives ($\land, \lor, \lnot, \implies, \iff$)
                \item Quantifiers ($\forall, \exists$)
            \end{itemize}
        }
}
\newglossaryentry{smtp}{
    name={SMT problem},
    text={SMT problem},
    plural={SMT problems},
    description={
            Decision problems that are true or false depending on whether a given \gls{fol} formula
            is true or false with respect to a given background theory. \\
            In the context of computer science, the background theories usually include
            arithmetic, bitvectors, arrays, etc.
        }
}
\newglossaryentry{sat}{
    first={Satisfiability (SAT)},
    name={Satisfiability},
    text={SAT},
    description={
            A formula is satisfiable if it is possible to find a set of values for its variables
            that makes the formula true.
            On the other hand, a formula is unsatisfiable if such a set does not exist.
            \begin{multline*}
                \text{Example} \\
                (a \lor c) \land (b \lor c) \land (\neg a \lor \neg c) \\
                \text{is satisfiable with the assignment } a = 1, b = 1, c = 0 \\
            \end{multline*}
        }
}
\newglossaryentry{convopt}{
    name={Convex optimization},
    text={convex optimization},
    description={
            A convex optimization problem is a problem where the objective function and the constraints
            are convex functions. \\
            A convex function is a function whose domain is a convex set and that satisfies the following
            property:
            \begin{equation*}
                f(\lambda x + (1 - \lambda)y) \leq \lambda f(x) + (1 - \lambda)f(y)
            \end{equation*}
            for all $x, y \in \text{dom}(f)$ and $\lambda \in [0, 1]$.
        }
}
\newglossaryentry{cnf}{
    first={Conjunctive Normal Form (CNF)},
    name={Conjunctive Normal Form},
    text={CNF},
    description={
            A formula is in \gls{cnf} if it is a conjunction of clauses, where a clause is a
            disjunction of literals. \\
            A literal can be a boolean variable or its negation. \\
            \\
            \begin{equation*}
                \bigwedge_{i=1}^n \bigvee_{j=1}^{m_i} l_{ij}
            \end{equation*}
        }
}
\newglossaryentry{lp}{
    first={linear programming (LP)},
    name={Linear Programming},
    text={LP},
    description={
            A Linear Programming problem is an optimization problem where the objective function and the constraints
            are linear functions. \\
            A typical Linear Programming problem has the following form:
            \begin{equation*}
                \begin{aligned}
                     & \text{maximize}   & c^T x      \\
                     & \text{subject to} & A x \leq b \\
                     &                   & x \geq 0
                \end{aligned}
            \end{equation*}
            where $x \in \mathbb{R}^d$ is the vector of variables to be determined, $c \in \mathbb{R}^d$ and $b \in \mathbb{R}^n$ are vectors of coefficients, and $A \in \mathbb{R}^{n \times d}$ is a matrix of coefficients.
        }
}
\newglossaryentry{fplp}{
    first={floating-point linear programming (FPLP)},
    name={Floating-Point Linear Programming},
    text={FPLP},
    description={
            A Floating-Point Linear Programming problem is an optimization problem where the objective function and the constraints
            are linear functions, but computations are performed in floating-point arithmetic. \\
            A typical Floating-Point Linear Programming problem has the following form:
            \begin{equation*}
                \begin{aligned}
                     & \text{maximize}   & c^T x      \\
                     & \text{subject to} & A x \leq b \\
                     &                   & x \geq 0
                \end{aligned}
            \end{equation*}
            where $x \in \mathbb{R}^d$ is the vector of variables to be determined, $c \in \mathbb{R}^d$ and $b \in \mathbb{R}^n$ are vectors of coefficients, and $A \in \mathbb{R}^{n \times d}$ is a matrix of coefficients.
        }
}
\newglossaryentry{qf-lia}{
    name={QF\_LIA},
    text={QF\_LIA},
    description={
            Quantifier-free linear integer arithmetic. In essence, Boolean combinations of inequations between linear polynomials over integer variables.
        }
}
\newglossaryentry{qf-lra}{
    first={quantifier-free linear real arithmetic (QF\_LRA)},
    name={QF\_LRA},
    text={QF\_LRA},
    description={
            Quantifier-free linear real arithmetic. In essence, Boolean combinations of inequations between linear polynomials over real variables.
        }
}
\newglossaryentry{nnf}{
    name={Negation Normal Form},
    text={NNF},
    first={Negation Normal Form (NNF)},
    description={
            A formula where the negation operator $\neg$ is only applied to variables. \\
            $\neg a$ is a NNF formula, while $\neg (a \lor b)$ is not.
        }
}
\newglossaryentry{dpll}{
    name={DPLL algorithm},
    text={DPLL},
    first={Davis-Putnam-Logemann-Loveland (DPLL)},
    description={
            A complete \gls{sat} backtracking algorithm for deciding the satisfiability of propositional logic formulas in \gls{cnf}.
        }
}
\newglossaryentry{dpllt}{
    name={DPLL(T) algorithm},
    text={DPLL(T)},
    first={Davis-Putnam-Logemann-Loveland with Theory (DPLL(T))},
    description={
            A complete \gls{sat} backtracking algorithm for deciding the satisfiability of propositional logic formulas in \gls{cnf} with theory-specific reasoning.
        }
}
\newglossaryentry{dp}{
    name={DP algorithm},
    text={DP},
    first={Davis-Putnam (DP)},
    description={
            A complete \gls{sat} algorithm that uses the resolution inference rule to decide the satisfiability of propositional logic formulas in \gls{cnf}.
        }
}
\newglossaryentry{mps}{
    name={Mathematical Programming System},
    text={MPS},
    first={Mathematical Programming System (MPS)},
    description={
            A common file format for representing \gls{lp} problems.
            Developed by IBM, it is supported by most commercial \gls{lp} solvers.
        }
}
\newglossaryentry{soi}{
    name={Sum of Infeasibilities},
    text={SoI},
    first={Sum of Infeasibilities (SoI)},
    description={
            A metric used to evaluate the quality of an \gls{lp} relaxation.
            It is the sum of the infeasibilities of the constraints that are violated by the solution.
        }
}
\newglossaryentry{onnx}{
    name={Open Neural Network Exchange},
    text={ONNX},
    first={Open Neural Network Exchange (ONNX)},
    description={
            An open-source format for representing deep learning models.
            It is supported by a wide range of tools and libraries.
        }
}
\newglossaryentry{cdlc}{
    name={Conflict Driven Learning Clause},
    text={CDLC},
    first={Conflict Driven Learning Clause (CDLC)},
    description={
            A clause that is added to the \gls{cnf} formula after a conflict is detected during the \gls{sat} solving process.
            It is used to prevent the solver from exploring the same path that led to the conflict.
        }
}
\newglossaryentry{oom}{
    name={Out Of Memory},
    text={OOM},
    first={Out Of Memory (OOM)},
    description={
            A condition where the system runs out of memory during the solving process.
        }
}
\newglossaryentry{sk}{
    name={Sloane–Stufke},
    text={SK},
    first={Sloane–Stufken (SK)},
    description={
            Set of benchmarks for testing the performance of \gls{lp} solvers on instances of linear programming problems with a large number of constraints and variables, as well as a high degree of degeneracy.
        }
}

\definecolor{keywords}{HTML}{C586C0}
\definecolor{type}{HTML}{0000FF}
\definecolor{operator}{HTML}{569CD6}
\definecolor{comments}{HTML}{6a9955}
\definecolor{variable}{HTML}{9e9b00}
\definecolor{number}{HTML}{098658}
\definecolor{function}{HTML}{795E26}

\lstdefinelanguage{SMT2}{
    alsoletter=-, %
    sensitive=true,
    morekeywords={set-logic, declare-fun, assert, check-sat, get-model},
    morekeywords=[2]{Int, Bool, Real},
    morekeywords=[3]{and, or, not, imply, ite},
    morekeywords=[4]{a, b, c, d, e, f, g, h, i, j, k, l, m, n, o, p, q, r, s, t, u, v, w, x, y, z},
    morecomment=[l]{;},
    morestring=[b]",
}
\lstdefinelanguage{mps}{
    alsoletter=-, %
    sensitive=true,
    morekeywords={NAME, ROWS, COLUMNS, RHS, RANGES, BOUNDS, BINARIES, GENERALS, ENDATA},
    morekeywords=[2]{N, L, E, G, UP, LO, FX, FR, MI, PL},
    morecomment=[l]{*},
    morestring=[b]",
}
\lstdefinelanguage{bazel}{
    alsoletter=-, %
    sensitive=true,
    morekeywords={load, workspace, http_archive, cc_binary, cc_library, cc_test, cc_toolchain, cc_toolchain_suite, filegroup, genrule, package_group, package, sh_binary, sh_library, sh_test},
    morekeywords=[2]{glob},
    morecomment=[l]{\#},
    morestring=[b]",
}
\lstdefinelanguage{yacc}{
    alsoletter={\#, \%, -, _, :}, %
    morekeywords={\#include, \#define, \#ifdef, \#endif},
    morekeywords=[2]{%
            \%union, \%token, \%type, \%nonassoc, \%left, \%prec, \%right, \%start,
            \%grammar, \%pure_parser, \%define, \%expect, \%expect-rr, \%file, \%glr-parser,
            \%parse-param, \%parse-param-rr, \%skeleton, \%debug, \%output,
            \%locations, \%skeleton
        },
    morekeywords=[3]{%
            , stringVal,
        }
    keywordstyle=\color{keywords}\bfseries,
    morecomment=[l][\color{comments}]{//}, %
    morecomment=[s][\color{comments}]{/*}{*/}, %
    morestring=[b]",
    morestring=[b]',
}
\lstdefinelanguage{flex}{
    alsoletter={\#, \%, -, _, :}, %
    morekeywords={\#include, \#define, \#ifdef, \#endif},
    morekeywords=[2]{%
            typedef
        },
    morekeywords=[3]{%
            , stringVal,
        }
    keywordstyle=\color{keywords}\bfseries,
    morecomment=[l][\color{comments}]{//}, %
    morecomment=[s][\color{comments}]{/***}{*/}, %
    morestring=[b]",
    morestring=[b]',
}
\newcounter{eqcounter}

\newcommand{\todo}[1]{}
\newcommand{\new}{}

\newcommand{\delpi}{\textit{Delpi}\xspace}
\newcommand{\pydelpi}{\textit{pydelpi}\xspace}

\newcommand{\glpk}{\textit{GLPK}\xspace}

\newcommand{\soplex}{\textit{SoPlex}\xspace}
\newcommand{\qsoptex}{\textit{QSoptex}\xspace}
\newcommand{\highs}{\textit{HiGHS}\xspace}
\newcommand{\gurobi}{\textit{Gurobi}\xspace}

\newcommand{\bg}{Bartels–Golub\xspace}
\newcommand{\ft}{Forrest–Tomlin\xspace}

\newcommand{\R}{\mathbb{R}}

\newcommand{\N}{\mathbb{N}}
\newcommand{\Q}{\mathbb{Q}}

\newcommand{\1}{\mathbbm{1}}
\newcommand{\e}{\mathbbm{e}}
\newcommand{\x}{\boldsymbol{x}}
\newcommand{\y}{\boldsymbol{y}}

\newcommand{\U}{\boldsymbol{U}}
\newcommand{\G}{\boldsymbol{G}}
\newcommand{\A}{\boldsymbol{A}}
\newcommand{\B}{\boldsymbol{B}}
\newcommand{\I}{\boldsymbol{I}}

\newcommand{\Epsilon}{\boldsymbol{\mathcal{E}}}
\newcommand{\estar}{\epsilon^\star}
\newcommand{\tstar}{\tau^\star}
\newcommand{\LI}{\boldsymbol{\mathcal{L}}}
\newcommand{\bs}[1]{\boldsymbol{#1}}
\newcommand{\hbs}[1]{\hat{\boldsymbol{#1}}}

\newcommand{\tbs}[1]{\tilde{\boldsymbol{#1}}}
\newcommand{\argmin}{\operatorname*{argmin}}
\newcommand{\fl}[2][\epsilon]{\textbf{fl}_{#1}\left(#2\right)}
\newcommand{\base}[1][]{{{\mathcal{B}}_{#1}}}

\newcommand{\maxiterations}{2^{n}}
\newcommand{\cb}{\boldsymbol{c}_{\base}}

\newcommand{\zb}{\boldsymbol{z}_{\base}}

\newcommand{\zbi}[1][i]{z_{\base_{#1}}}

\newcommand{\rmax}{r^-_{\max}}
\newcommand{\dmin}{d^+_{\min}}

\newcommand{\umin}{u^{1\sim2}_{\min}}
\newcommand{\xb}{\boldsymbol{x}_{\base}}

\newenvironment{proofsketch}
{\par\noindent\textit{Proof sketch.}\enspace}
{\par}

\begin{document}

\title{Verified Linear Programming via Tolerance-Aware Precision Boosting}
\titlerunning{Verified Linear Programming via Tolerance-Aware Precision Boosting}

\author{Ernesto Casablanca\inst{1}
  \orcidlink{0009-0009-3741-1624}
  \and
  Martin Sidaway\inst{1}
  \orcidlink{0000-0001-6481-1169}
  \and
  Sadegh Soudjani\inst{2}
  \orcidlink{0000-0003-1922-6678}
  \and
  Paolo Zuliani\inst{3}
  \orcidlink{0000-0001-6481-1169}
}
\authorrunning{E. Casablanca et al.}

\institute{Newcastle University, United Kingdom\\
\email{e.casablanca2@newcastle.ac.uk, martin.sidaway@icloud.com} \and
University of Birmingham and MPI-SWS, Birmingham, United Kingdom\\
\email{sadegh@mpi-sws.org} \and
Universit\`{a} di Roma ``La Sapienza'', Rome, Italy\\
\email{zuliani@di.uniroma1.it}}

\maketitle

\begin{abstract}
  Linear programming plays a fundamental role in computer science, with applications in optimization, formal verification, SMT solving, and numerous other domains.
  When exactness guarantees are required, numerical inaccuracies arising from floating-point arithmetic can compromise soundness, whereas exact rational arithmetic often incurs significant computational overhead.
  In this paper, we investigate the numerical stability of the simplex algorithm and establish conditions under which a precision-boosting floating-point implementation provably produces the
  same pivot decisions and final basis as an exact rational implementation, from which the final result is reconstructed and certified exactly.
  Our analysis shows that correctness depends on both arithmetic precision and a careful handling of numerical tolerances.
  Based on these results, we develop a tolerance-aware, precision-boosting simplex algorithm with formal correctness guarantees.
  Finally, we introduce a delta-complete termination criterion that allows the algorithm to terminate once certified upper and lower bounds on the optimal objective differ by at most a user-specified threshold delta, providing a certified, user-controlled optimality gap.
  \keywords{Linear Programming \and Exact solution \and Delta-complete \and Numerical stability}
\end{abstract}

\section{Introduction}
\label{sec:intro}

\Gls{lp} is a fundamental tool in computer science,
with applications ranging from optimization~\cite{cite:lp-introduction} to software analysis and formal verification~\cite{cite:lp-based-software-verification,cite:lp-in-software-verification,cite:lp-in-tacas-verification},
and it is essential in any \gls{smt} solver that supports linear arithmetic theories~\cite{ref:z3-dpll-t,ref:cvc5,ref:yices}.
There exist many algorithms to solve an \gls{lp} problem, with the simplex~\cite{ref:simplex,dantzig1951maximization} and interior-point methods~\cite{cite:interior-point-revolution} being the most widely adopted.
As problems grow in size, the need for efficient and reliable solvers becomes more pressing.
Since in most settings an exact solution is not strictly required, a ``good enough'' solution is readily traded for greater computational speed.
In fact, over the years, solvers using floating-point arithmetic have achieved impressive performance;
\gurobi~\cite{ref:gurobi}, \highs~\cite{res:highs}, and \glpk~\cite{ref:glpk} are excellent examples.
However, there are cases where we need guarantees regarding the numerical error introduced by the solver.
This applies, for instance, to formal verification tools and \gls{smt} solvers, which must employ slower but exact rational arithmetic to ensure the correctness of all \gls{lp} solutions~\cite{ref:lra-dpll-t,paper:lp-for-smt}.
\new{Here, \emph{verified} means that outcomes obtained through floating-point
  computation are validated in exact rational arithmetic using
  primal-dual optimality, infeasibility, or unboundedness certificates;
  the correctness proofs themselves are not mechanized.}
Unfortunately, in rational arithmetic, the cost of each operation is not constant and can dominate the running time of the procedure.

\paragraph{\textbf{Related Work}}
Efficient exact \gls{fplp} solvers perform most of their computations in floating-point arithmetic while relying on refinement techniques to certify the final result using exact rational arithmetic~\cite{cite:refinement-thesis,ref:soplex-precision-boosting}. Two principal approaches have emerged. The \qsoptex~\cite{ref:qsoptex} solver employs \textbf{incremental precision boosting}~\cite{ref:precision-boosting}, where the floating-point precision is progressively increased until a candidate solution is obtained and subsequently validated in rational arithmetic. An alternative strategy is iterative refinement~\cite{ref:iterative-refinement}, implemented in \soplex~\cite{ref:soplex}, which typically achieves better performance but may struggle to recover from numerical difficulties. Although these approaches have proved highly successful in practice, the theoretical guarantees underlying precision boosting remain limited.
In particular, it has not been established under which conditions a precision-boosting simplex implementation is guaranteed to reproduce the final solution of the exact simplex algorithm.
In this paper, we address this gap by proving that precision boosting based on the \bg update can be used as a \textit{limited-precision LP oracle}~\cite{ref:lp-oracles}.

\paragraph{\textbf{Contributions}}
This paper establishes the theoretical foundations of verified linear programming through tolerance-aware precision boosting.
First, we revisit the numerical analysis of the \bg update~\cite{ref:stable-simplex}, deriving rigorous error bounds that replace several approximations in the original analysis and adapting the results to the IEEE~754 floating-point standard.
Building on these results, we prove that there exist problem-dependent precision and tolerance parameters under which a floating-point implementation of the simplex algorithm provably follows the same pivot sequence and returns the same solution as its exact rational counterpart.
Finally, we introduce a $\delta$-complete simplex procedure that
maintains exactly validated upper and lower bounds on the optimal
objective value and terminates once their difference is at most a
user-specified threshold $\delta$. The resulting primal-dual certificate
provides a certified, user-controlled bound on suboptimality.

\smallskip

\section{Preliminaries}
\label{sec:preliminaries}

\paragraph{\textbf{Notation}}
We will use the following notation:
$\Q$ denotes the set of rational numbers;
a bold uppercase letter ($\bs{A}, \bs{\Pi}$) denotes a matrix;
a bold lowercase letter ($\bs{a}, \bs{\pi}$) denotes a vector;
$\bs{A}_{:,j}$ ($\bs{A}_{k,:}$) denotes the $j$-th column ($k$-th row) of $\bs{A}$;
$A_{i,j}$ denotes the element in the $i$-th row and $j$-th column of $\bs{A}$;
$a_i$ denotes the $i$-th element of the vector $\bs{a}$;
$\bs{A}^T, \bs{A}^{-1}$ denote the transpose and the inverse of $\bs{A}$, respectively;
$|\bs{A}|$ denotes a matrix containing the element-wise absolute values of $\bs{A}$;
$\|\bs{A}\|_1 \coloneqq \max_{1 \le j \le n}\left(\sum_{1 \le i \le m} |A_{i,j}|\right)$ with $\bs{A} \in \new{\Q}^{m \times n}$;
$\|\bs{A}\|_{\max} \coloneqq \max_{1 \le i \le m, 1 \le j \le n}|{A_{i,j}}|$ with $\bs{A} \in \new{\Q}^{m \times n}$;
$\bs{A} \{\le, <, >, \ge \} \bs{B}$ denotes the element-wise relationship between $\bs{A}$ and $\bs{B}$;
$\bs{I}$ denotes the identity matrix;
$\LI$ denotes the unit lower triangular matrix;
$\bs{0}$ ($\bs{1}$) denotes a matrix or vector with all elements equal to $0$ ($1$), with the dimension indicated explicitly in the subscript when necessary.
This notation extends naturally to vectors.

\subsection{Floating-point Arithmetic}
\label{sec:floating-point}

Floating-point numbers belong to a set $\mathbb{F} \subset \new{\Q}$ whose elements have the form
\begin{equation*}
  y = \pm m \cdot \beta ^{e-t} ,
\end{equation*}
where $\beta \ge 2$ is the \textit{base} (or radix), $t$ is the \textit{precision} (i.e., the number of digits in the mantissa), $m$ is the integer \textit{significand} satisfying $0 \le m \le \beta^t - 1$, and $e$ is the \textit{exponent} bounded by $e_{\text{min}} \le e \le e_{\text{max}}$.
Floating-point numbers are usually stored in a normalized form, meaning $m \ge \beta^{t - 1}$ except for the number $0$.
When performing the error analysis in floating-point arithmetic, we will refer to the standard model~\cite{ref:accuracy-and-stability} since it applies to the IEEE 754 standard arithmetic~\cite{ref:ieee754} used by virtually all modern computing devices.
\begin{definition}[Standard model]
  \label{def:standard-model}
  Given two floating-point numbers $x, y \in \mathbb{F}$ and an operation $\odot \in \{+, -, \times, /\}$, the result of the operation is
  \begin{equation*}
    \fl{x \odot y} = (x \odot y)(1 + \e)^{p} \quad \text{with } |\e| \le \epsilon .
  \end{equation*}
  where $\epsilon$ is the \textit{unit roundoff} (i.e., $2^{-53} \approx 1.1 \times 10^{-16}$ for IEEE 754 double precision) and $\fl{x \odot y}$ is the floating-point computation of $x \odot y$;
  $p$ can be either $-1$ or $1$, according to convenience.
\end{definition}
Note that, by using~\autoref{def:standard-model}, we assume that neither overflow nor underflow occurs~\cite{ref:accuracy-and-stability}.
Since floating-point arithmetic is inexact, we need a way of quantifying the error introduced by the computation to judge the quality of the result.
We will consider the \textit{backward error}, which measures the smallest perturbation to the input that would yield the computed result exactly (see \autoref{fig:stability}).
\begin{figure}[h]
  \centering
  \tikzset{every picture/.style={line width=0.75pt}} %

\begin{tikzpicture}[x=0.75pt,y=0.5pt,yscale=-1,xscale=1]
    \draw (105,125) node   [anchor=north west][align=right] {Backward\\error};
    \draw (365,125) node [anchor=north west][inner sep=0.75pt][align=left] {Forward\\error};
    \draw (150,70) node [anchor=north west][inner sep=0.75pt][align=left] {Input};
    \draw (332,70) node [anchor=north west][inner sep=0.75pt][align=left] {Output};
    \draw (166,95) node [anchor=north west][inner sep=0.75pt] {$x$};
    \draw (150,175) node [anchor=north west][inner sep=0.75pt] {$x+\mathbbm{e}x$};
    \draw (315,90) node [anchor=north west][inner sep=0.75pt] {$y=f(x)$};
    \draw (315,175) node [anchor=north west][inner sep=0.75pt] {$\hat{y}=f(x+\mathbbm{e}x)$};
    \draw    (170.84,110) .. controls (172.48,111.69) and (172.46,113.35) .. (170.77,115) .. controls (169.08,116.65) and (169.06,118.31) .. (170.7,120) .. controls (172.35,121.69) and (172.33,123.35) .. (170.64,125) .. controls (168.95,126.65) and (168.93,128.31) .. (170.57,130) .. controls (172.21,131.69) and (172.19,133.35) .. (170.5,135) .. controls (168.81,136.65) and (168.79,138.31) .. (170.43,140) .. controls (172.07,141.69) and (172.05,143.35) .. (170.36,145) .. controls (168.67,146.65) and (168.65,148.31) .. (170.3,150) .. controls (171.94,151.69) and (171.92,153.35) .. (170.23,155) .. controls (168.54,156.65) and (168.52,158.31) .. (170.16,160) .. controls (171.8,161.69) and (171.78,163.35) .. (170.09,164.99) .. controls (168.4,166.64) and (168.38,168.3) .. (170.03,169.99) -- (170,172.33) -- (170,172.33) ;
    \draw    (180,98) -- (310,98) ;
    \draw [shift={(310,98)}, rotate = 180] [color={rgb, 255:red, 0; green, 0; blue, 0 }  ][line width=0.75]    (10.93,-3.29) .. controls (6.95,-1.4) and (3.31,-0.3) .. (0,0) .. controls (3.31,0.3) and (6.95,1.4) .. (10.93,3.29)   ;
    \draw    (353.66,110.33) .. controls (355.37,111.95) and (355.42,113.62) .. (353.8,115.33) .. controls (352.18,117.04) and (352.23,118.71) .. (353.94,120.33) .. controls (355.65,121.95) and (355.7,123.62) .. (354.08,125.33) .. controls (352.46,127.04) and (352.51,128.71) .. (354.22,130.33) .. controls (355.93,131.95) and (355.97,133.61) .. (354.35,135.32) .. controls (352.73,137.03) and (352.78,138.7) .. (354.49,140.32) .. controls (356.2,141.94) and (356.25,143.61) .. (354.63,145.32) .. controls (353.01,147.03) and (353.06,148.7) .. (354.77,150.32) .. controls (356.48,151.95) and (356.52,153.61) .. (354.9,155.32) .. controls (353.28,157.03) and (353.33,158.7) .. (355.04,160.31) .. controls (356.75,161.93) and (356.8,163.6) .. (355.18,165.31) -- (355.31,170) -- (355.31,170) ;
    \draw    (190,182) -- (310,182) ;
    \draw [shift={(310,182)}, rotate = 180] [color={rgb, 255:red, 0; green, 0; blue, 0 }  ][line width=0.75]    (10.93,-3.29) .. controls (6.95,-1.4) and (3.31,-0.3) .. (0,0) .. controls (3.31,0.3) and (6.95,1.4) .. (10.93,3.29)   ;
    \draw  [dash pattern={on 4.5pt off 4.5pt}]  (180,104) -- (325.61,169.16) ;
    \draw [shift={(327.42,170)}, rotate = 204.72] [color={rgb, 255:red, 0; green, 0; blue, 0 }  ][line width=0.75]    (10.93,-3.29) .. controls (6.95,-1.4) and (3.31,-0.3) .. (0,0) .. controls (3.31,0.3) and (6.95,1.4) .. (10.93,3.29)   ;

\end{tikzpicture}
  \caption{Forward and backward errors, adapted from~\cite{ref:accuracy-and-stability}.
    The solid arrows and dashed arrows represent an exact and a floating-point computation, respectively.}
  \label{fig:stability}
\end{figure}
We use the notation from~\cite{ref:accuracy-and-stability} to indicate error bounds over multiple operations:
\begin{lemma}
  If $|\e_i| \le \epsilon$, $p_i = \pm 1$ for $i = 1, \ldots, n$ and $n\epsilon < 1$, then
  \begin{equation*}
    \prod_{i = 1}^n(1 + \e_i)^{p_i} = 1 + \Theta_n, \qquad |\Theta_n| \le \frac{n\epsilon}{1 - n\epsilon} \eqqcolon \gamma_n .
  \end{equation*}
\end{lemma}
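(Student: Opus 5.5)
The plan is induction on $n$, splitting the factors by the sign of the exponent. Let $k$ be the number of indices with $p_i = +1$ and $n-k$ the number with $p_i=-1$. The product is then
\begin{equation*}
  P \coloneqq \frac{\prod_{i:p_i=1}(1+\e_i)}{\prod_{i:p_i=-1}(1+\e_i)} .
\end{equation*}
Since every $|\e_i| \le \epsilon < 1$, all factors are positive. Monotonicity then gives the two-sided bound
\begin{equation*}
  \frac{(1-\epsilon)^k}{(1+\epsilon)^{n-k}} \le P \le \frac{(1+\epsilon)^k}{(1-\epsilon)^{n-k}} .
\end{equation*}
Both extremes are bounded by the all-positive or all-negative cases:
\begin{equation*}
  (1-\epsilon)^n \le P \le (1-\epsilon)^{-n},
\end{equation*}
because $1/(1+\epsilon) \ge 1-\epsilon$ and $1+\epsilon \le 1/(1-\epsilon)$. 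This reduces the claim to two scalar inequalities: $(1-\epsilon)^{-n} - 1 \le \gamma_n$ and $1 - (1-\epsilon)^n \le \gamma_n$.

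For the lower side, Bernoulli's inequality gives $(1-\epsilon)^n \ge 1 - n\epsilon$. Hence $1-(1-\epsilon)^n \le n\epsilon \le n\epsilon/(1-n\epsilon)=\gamma_n$, using $0 < 1-n\epsilon \le 1$.

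For the upper side, Bernoulli again gives $(1-\epsilon)^n \ge 1-n\epsilon > 0$, where positivity uses the hypothesis $n\epsilon<1$. Therefore
\begin{equation*}
  (1-\epsilon)^{-n} - 1 \le \frac{1}{1-n\epsilon} - 1 = \frac{n\epsilon}{1-n\epsilon} = \gamma_n .
\end{equation*}
Setting $\Theta_n \coloneqq P - 1$ then yields $|\Theta_n| \le \gamma_n$.

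The only delicate point is the upper side, since reciprocals of $(1+\e_i)$ can exceed $1+\epsilon$. This is exactly why the bound needs the hypothesis $n\epsilon<1$ and the $1/(1-n\epsilon)$ denominator. I will handle it by first reducing to $(1-\epsilon)^{-n}$ via monotonicity, and only then invoking Bernoulli. An alternative is a direct induction on $n$ using $\gamma_{n-1}+\epsilon+\gamma_{n-1}\epsilon \le \gamma_n$ (and its analogue for division), but the monotonicity argument avoids that case analysis.
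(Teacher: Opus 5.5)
Your proof is correct and complete. Positivity of the factors and $(1+\epsilon)(1-\epsilon)\le 1$ give the sandwich $(1-\epsilon)^n \le P \le (1-\epsilon)^{-n}$. Bernoulli's inequality, together with $0<1-n\epsilon\le 1$, then closes both sides.

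The paper itself gives no proof of this lemma. It is quoted as background from Higham's \emph{Accuracy and Stability of Numerical Algorithms}. There the usual argument is an induction on $n$ that appends one factor at a time and splits on the sign of the last exponent. It uses $\epsilon+\gamma_{n-1}(1+\epsilon)\le\gamma_n$ for a multiplication and $(\gamma_{n-1}+\epsilon)/(1-\epsilon)\le\gamma_n$ for a division, which is essentially the alternative you mention at the end.

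Your global argument avoids that case analysis. It also shows that the range $[(1-\epsilon)^n,(1-\epsilon)^{-n}]$ is attained at both ends: all multiplications by $1-\epsilon$ at the bottom, all divisions by $1-\epsilon$ at the top. So the only slack in $\gamma_n$ is the single use of Bernoulli in $(1-\epsilon)^{-n}\le 1/(1-n\epsilon)$, and you get the sharper one-sided bound $\Theta_n\ge -n\epsilon$ for free. The inductive route, by contrast, tracks the error one operation at a time, which matches how such factors are accumulated in later error analyses.

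One cosmetic point: your opening sentence announces an induction on $n$ that you never carry out. The argument is monotonicity plus Bernoulli, so that sentence should be reworded.
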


\subsection{Linear Programming and Simplex Algorithm}
\label{sec:linear-programming}

\gls{lp} problems are often presented in \textit{standard form},
\begin{equation}
  \label{eq:lp-standard}
  \min\left\{\bs{c}^T \x \;\middle\vert\;
  \bs{A} \x = \bs{b},\quad
  \x \geq \bs{0}\right\} ,
\end{equation}
where $\A \in \new{\Q}^{m \times n}$, $\bs{b} \in \new{\Q}^m$, $\bs{c} \in \new{\Q}^n$, and non-negative unknowns $\x \in \Q^n$.
Any \gls{lp} problem can be converted into the standard form in polynomial time~\cite{book:lp}.
As is common in the literature, we assume that $\A$ has full row rank.

An \gls{lp} problem is said to be \textit{feasible} if there exists at least one vector $\x$ (\textit{feasible solution}) that satisfies all the constraints.
If a feasible solution produces the minimum objective value (i.e., $\bs{c}^T \x$) among all feasible solutions, it is called an \textit{optimal solution}.
Every \gls{lp} in the standard form has the \textit{dual} problem
\begin{equation}
  \label{eq:lp-dual}
  \max\left\{\bs{b}^T \y \middle\vert\;
  \bs{A}^T \y + \bs{r} = \bs{c},\quad
  \bs{r} \geq \bs{0}\right\} ,
\end{equation}
with $\y \in \new{\Q}^m, \bs{r} \in \new{\Q}^{n}$.
The original problem \eqref{eq:lp-standard} is called the \textit{primal}.
It is also worth mentioning two well-established duality theorems~\cite{book:lp}.
\begin{theorem}[Weak duality]
  \label{thm:weak-duality}
  Given an \gls{lp} problem \eqref{eq:lp-standard}, let $\x$ and $\y$ be primal and dual-feasible solutions, respectively.
  Then $\bs{c}^T\x \geq \bs{b}^T\y$.
\end{theorem}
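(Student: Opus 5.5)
The plan is a direct computation that combines primal feasibility, dual feasibility, and the sign conditions. Let $\x$ be primal feasible, so $\A\x = \bs{b}$ and $\x \ge \bs{0}$. Let $\y$ be dual feasible, so there is an $\bs{r} \ge \bs{0}$ with $\A^T\y + \bs{r} = \bs{c}$.

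First, I would substitute $\bs{b} = \A\x$ into the dual objective:
\begin{equation*}
  \bs{b}^T\y = (\A\x)^T\y = \x^T\A^T\y .
\end{equation*}
Next, I would use the dual constraint in the form $\A^T\y = \bs{c} - \bs{r}$:
\begin{equation*}
  \bs{b}^T\y = \x^T(\bs{c} - \bs{r}) = \bs{c}^T\x - \bs{r}^T\x .
\end{equation*}
Both steps are exact identities over $\Q$, relying only on associativity of matrix products and the transpose rule $(\A\x)^T = \x^T\A^T$.

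It then remains to show that $\bs{r}^T\x \ge 0$. This holds because $\bs{r}^T\x = \sum_{j=1}^n r_j x_j$, and every term is a product of two nonnegative rationals, since $\bs{r} \ge \bs{0}$ and $\x \ge \bs{0}$ componentwise. Hence $\bs{c}^T\x - \bs{b}^T\y = \bs{r}^T\x \ge 0$, which is the claim. As a by-product, the identity shows that the duality gap equals $\bs{r}^T\x$, which is the complementary slackness quantity.

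There is no genuine obstacle here. The only point needing care is that dual feasibility of $\y$ is understood to include the existence of a slack vector $\bs{r} \ge \bs{0}$ as in~\eqref{eq:lp-dual}. The sign argument uses this nonnegativity of $\bs{r}$ together with the nonnegativity of $\x$.
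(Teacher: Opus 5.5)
Your proof is correct and complete. Substituting $\bs{b} = \A\x$ and $\A^T\y = \bs{c} - \bs{r}$ gives $\bs{c}^T\x - \bs{b}^T\y = \bs{r}^T\x \ge 0$, which is the standard textbook argument. The paper gives no proof of its own here; it simply cites weak duality from a standard LP reference, so your argument supplies exactly the reasoning the paper relies on later, for example when it invokes weak duality to conclude $\omega_U \ge \omega_L$ in the $\delta$-optimal case.
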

\begin{theorem}[Strong duality]
  \label{thm:strong-duality}
  Given an \gls{lp} problem \eqref{eq:lp-standard}, if the primal has an optimal solution then so does the dual.
  Moreover, they have the same value. %
\end{theorem}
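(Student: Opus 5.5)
We prove the strong duality statement by reducing it to a theorem of the alternative, namely Farkas' lemma, which we take as known. Let $\x^\star$ be an optimal primal solution and let $z^\star \coloneqq \bs{c}^T\x^\star$. By \autoref{thm:weak-duality}, every dual-feasible $\y$ satisfies $\bs{b}^T\y \le z^\star$. It therefore suffices to produce a dual-feasible $\y$ with $\bs{b}^T\y \ge z^\star$. Such a $\y$ attains the bound, so it is dual optimal and the two optimal values coincide.

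To build this $\y$, we consider the system
\begin{equation*}
  \bs{A}^T\y \le \bs{c}, \qquad \bs{b}^T\y \ge z^\star .
\end{equation*}
Suppose the system has no solution. We apply the inequality form of Farkas' lemma to the constraint matrix obtained by stacking $\bs{A}^T$ and $-\bs{b}^T$, with right-hand side given by $\bs{c}$ and $-z^\star$. This yields $\bs{u} \ge \bs{0}$ in $\Q^n$ and $\lambda \ge 0$ such that
\begin{equation*}
  \bs{A}\bs{u} = \lambda \bs{b}, \qquad \bs{c}^T\bs{u} < \lambda z^\star .
\end{equation*}
We then distinguish two cases according to the sign of $\lambda$.

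\textbf{Case $\lambda > 0$.} Set $\x' \coloneqq \bs{u}/\lambda$. Then $\x' \ge \bs{0}$ and $\bs{A}\x' = \bs{b}$, so $\x'$ is primal feasible. Moreover $\bs{c}^T\x' < z^\star$, which contradicts the optimality of $\x^\star$.

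\textbf{Case $\lambda = 0$.} Here $\bs{A}\bs{u} = \bs{0}$, $\bs{u} \ge \bs{0}$ and $\bs{c}^T\bs{u} < 0$. For every $t \ge 0$, the point $\x^\star + t\bs{u}$ is primal feasible, and its objective value $z^\star + t\,\bs{c}^T\bs{u}$ tends to $-\infty$ as $t$ grows. This again contradicts optimality.

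Hence the system is feasible. Taking any solution $\y$ and setting $\bs{r} \coloneqq \bs{c} - \bs{A}^T\y \ge \bs{0}$ gives a dual-feasible pair $(\y, \bs{r})$ with $\bs{b}^T\y \ge z^\star$, which completes the argument.

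The main obstacle is the appeal to Farkas' lemma, because everything else is elementary. If a self-contained argument is preferred, one can instead use the simplex method with an anti-cycling rule such as Bland's rule. It terminates at an optimal basis $\base$ whose reduced costs $\bs{c} - \bs{A}^T\B^{-T}\cb$ are all nonnegative. Then $\y \coloneqq \B^{-T}\cb$ is dual feasible, and $\bs{b}^T\y = \cb^T\B^{-1}\bs{b} = \cb^T\xb = z^\star$. In that route, termination of the simplex method is the step that requires care. All data lie in $\Q$, so the $\y$ obtained either way is rational, as the statement requires.
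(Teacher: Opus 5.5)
Your proof is correct. The paper gives no proof of this theorem: it quotes strong duality, together with weak duality, as a standard textbook result. It uses it only as background, to justify that $\bs{c}^T\x_U$ and $\bs{b}^T\y_L$ bracket the optimum in a $\delta$-optimality certificate. So your argument is a genuinely different route simply because the paper supplies none, and both of your versions are standard.

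The Farkas-based argument is the more general one. It needs no basis, no full-row-rank assumption, no Phase~I and no anti-cycling rule. Your case split on $\lambda$ is exactly right: $\lambda > 0$ gives a cheaper feasible point, and $\lambda = 0$ gives an improving recession direction.

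The one imprecision is your closing rationality remark. An arbitrary solution of $\bs{A}^T\y \le \bs{c}$, $\bs{b}^T\y \ge z^\star$ need not be rational. You should state that Farkas' lemma is applied over $\Q$, which is valid over any ordered field (e.g.\ by Fourier--Motzkin elimination), and then take a rational solution.

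Your simplex-based alternative is the one that ties into the rest of the paper. The witness $\y = \B^{-T}\cb$ with $\bs{c} - \bs{A}^T\y \ge \bs{0}$ is exactly what \autoref{alg:delta-complete-simplex} computes for $\omega_L$. The identity $\bs{b}^T\y = \cb^T\xb$ is what lets the gap test $\omega_U - \omega_L \le \delta$ succeed even for $\delta = 0$ once the exact optimal basis is reached. This route also gives a slightly stronger conclusion: an optimal dual solution that is basic and complementary to $\xb$, since $\bs{r}_{\base} = \bs{0}$. It pays for this by depending on the paper's standing full-row-rank assumption, on a Phase~I to obtain a feasible starting basis, and on termination of Bland's rule.
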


The revised simplex algorithm~\cite{cite:computational-simplex} solves \gls{lp} problems by moving from one vertex of the feasible region to another while improving the objective value at each step.
\new{Infeasibility is detected by a preliminary check over a feasible auxiliary \gls{lp} problem~\cite{cite:computational-simplex}.}
At each iteration, $m$ columns of $\A$ are selected and used to create \new{a non-singular} basis matrix $\B \in \new{\Q}^{m \times m}$.
Therefore,
\begin{equation*}
  \B = \begin{bmatrix} \A_{:,\base_1} & \A_{:,\base_2} & \dots \A_{:,\base_m} \end{bmatrix} ,
\end{equation*}
where $\base_i$ is the index of the column of $\A$ that is used in the $i$-th column of the basis.
The set containing the indices of these columns is denoted by $\base$.
The remaining $n - m$ columns correspond to the \textit{non-basic variables} and form the $\bs{N} \in \new{\Q}^{m \times (n - m)}$ matrix in a similar fashion.
Setting all variables outside the basis to be $0$, we \new{search for} the so-called \textit{basic feasible solution} $\x = \B^{-1} \bs{b}, \x \ge \bs{0}$.
At each iteration, the algorithm checks whether the objective value can be improved by swapping a basic variable with a non-basic one.
The vector of \textit{reduced costs} $\bs{r} \in \new{\Q}^{n}$ is defined as
\begin{equation*}
  \y = \B^{-T}\cb, \qquad
  \bs{r} = \bs{c} - \bs{A}^T\y .
\end{equation*}
If no component of $\bs{r}$ is negative, the algorithm terminates and returns $\x$ as the optimal solution.
Otherwise, the optimal value can still be improved, and an \textit{entering variable} $x_e$ among those with negative reduced costs is selected.
The \textit{leaving variable} $x_{\base_l}$, which will be replaced in the basis by the entering variable and set to $0$, is determined by the following conditions:
\begin{equation*}
  \bs{d} = \B^{-1} \A_{:,e}, \qquad
  l = \argmin_{d_i > 0, i = 1, \ldots, m} \frac{x_{\base_i}}{d_i} .
\end{equation*}
If no component of $\bs{d}$ is positive, the \gls{lp} is unbounded, since the objective value can be improved indefinitely without violating any constraint.

\subsection{Delta-Optimality Certificates}
\label{sec:delta-complete}

To give users a principled way to trade the exact optimal solution for a faster computation, while still guaranteeing that the result is within a user-defined tolerance $\delta$ of the exact optimal value,
we introduce the concept of a $\delta$-optimal solution and $\delta$-completeness, inspired by~\cite{ref:delta-decidability}.

\begin{definition}[$\delta$-optimality certificate]
  \label{def:delta-optimality-certificate}
  Given a standard \gls{lp} problem~\eqref{eq:lp-standard} and $\delta \ge 0$, a $\delta$-optimality certificate is a pair $(\x_U , \y_L)$ such that $\x_U$ and $\y_L$ are primal and dual feasible solutions, respectively, and
  \begin{equation*}
    0 \le \bs{c}^T \x_U - \bs{b}^T \y_L \le \delta .
  \end{equation*}
\end{definition}

\begin{definition}[$\delta$-completeness]
  \label{def:delta-complete}
  Given $\delta \ge 0$, a $\delta$-complete algorithm for standard \gls{lp} problems~\eqref{eq:lp-standard} is an algorithm that terminates and outputs exactly one of the following symbols:
  \begin{itemize}
    \item $\bs{\delta}$\textbf{-optimal}: if and only if \new{it can produce a $\delta$-optimality certificate $(\x_U , \y_L)$};
    \item \textbf{infeasible}: if and only if the \gls{lp} is infeasible;
    \item \textbf{unbounded}: if and only if the \gls{lp} is unbounded.
  \end{itemize}
\end{definition}

The \textbf{optimal} case is obtained when we solve the \gls{lp} problem exactly (i.e., $\bs{c}^T\x_U = \bs{b}^T \y_L$), and can be seen as a special case of $\delta$-optimality, \new{with} $\delta = 0$.
The soundness of a $\delta$-optimal solution is guaranteed by the duality theorems:
we are effectively using $\bs{c}^T\x_U$ and $\bs{b}^T\y_L$ as bounds around the true optimal value.

\subsection{\bg Basis Updates}
\label{sec:stable-simplex}

The revised simplex algorithm described in \autoref{sec:linear-programming} requires solving multiple linear systems at each iteration.
Instead of recomputing the inverse of the basis matrix $\B$,
most implementations exploit the structure of the problem by performing incremental updates of $\B$'s inverse~\cite{cite:product-form-simplex} or its factorization~\cite{ref:stable-simplex}.
To obtain guarantees on the error introduced by the floating-point arithmetic, we need to ensure all operations are numerically stable (i.e., the numerical errors are bounded).
The \bg update~\cite{ref:stable-simplex} is particularly promising.
Some of its variants, such as the \ft update~\cite{ref:forrest-tomlin}, are widely used in modern simplex implementations for their superior performance, but they lack the stability of the \bg algorithm.
In particular, the \ft approach omits the pivoting step, which is vital for numerical stability~\cite{ref:matrix-factorizations}.

The \bg procedure starts by factorizing the basis matrix $\B$ into a permutation matrix $\bs{\Pi}$, a lower triangular matrix $\bs{L}$ and an upper triangular matrix $\U$
such that $\B = \bs{\Pi}^{-1}\bs{L}\U$.
For simplicity, we ignore the permutation matrix, since it has no impact on the algorithm's properties or its numerical stability.
All linear systems involving $\B$ are solved using \textit{forward} and \textit{backward} substitution.
When the first pivot is performed, the leaving $l_1$-th column is replaced by the $e_1$-th entering variable.
The new basis $\B^{(1)}$ becomes
\begin{equation*}
  \B^{(1)} = \begin{bmatrix} \B_{:,1} & \dots & \B_{:,l_1-1} & \B_{:,l_1+1} & \dots \B_{:,m} & \A_{:,e_1} \end{bmatrix} .
\end{equation*}
The new column is added at the end, and all other columns after the $l_1$-th are shifted left by one.
The $\bs{L}\U$ factorization needs to be updated accordingly.
The procedure keeps $\bs{L}$ fixed, therefore the left term of the factorization becomes
\begin{equation*}
  \bs{L}^{-1}\B^{(1)} = \begin{bmatrix} \U_{:,1} & \dots & \U_{:,l_1-1} & \U_{:,l_1+1} & \dots & \U_{:,m} & \bs{L}^{-1}\bs{\Pi}\A_{:,e_1} \end{bmatrix} = \bs{H}^{(1)} .
\end{equation*}
Due to the nature of matrix multiplication, the rearrangement of the columns of $\B$ can be expressed simply as the same rearrangement of the columns of $\U$.
The appended column can also be computed easily because a left-applied matrix (here, $\bs{L}^{-1}\bs{\Pi}$) modifies the columns of its matrix operand independently.
The resulting matrix $\bs{H}^{(1)}$ is a Hessenberg matrix since all columns from the $(l_1+1)$-st onward have a non-zero entry in the first sub-diagonal, followed by zeros below.
To transform $\bs{H}^{(1)}$ into an upper triangular matrix $\U^{(1)}$, we perform a Gaussian elimination procedure with partial pivoting via the application of a series of elementary permutation matrices $\bs{\Pi}^{(1)}_j$ and trivial lower triangular matrix $\bs{\Gamma}^{(1)}_j$ such that
$\bs{\Pi}^{(1)}_j$ is either the identity matrix or the identity with the $j$-th and $(j + 1)$-st rows exchanged, and $\bs{\Gamma}^{(1)}_j$ has the form
\begin{equation*}
  \bs{\Gamma}^{(1)}_j = \begin{blockarray}{cccccccc}
    1 & 2  & & j & j + 1  && m  \\
    \begin{block}{[ccccccc]c}
      1 & 0 & \dots & 0 & 0 & \dots & 0 & 1  \\
      0 & 1 & \dots & 0 & 0 & \dots & 0 & 2  \\
      \vdots &  & \ddots &  &  &  & \vdots &   \\
      0 & 0 & \dots & 1 & 0 & \dots & 0 &  j \\
      0 & 0 & \dots & -g^{(1)}_j & 1 & \dots &0 & j+1  \\
      \vdots &  &  & & & \ddots & \vdots &   \\
      0 & 0 & \dots & 0 & 0 & \dots & 1 & m  \\
    \end{block}
  \end{blockarray}
\end{equation*}
with $|g^{(1)}_j| = \frac{\min(|H_{j,j}|, |H_{j + 1,j|})}{\max(|H_{j,j}|, |H_{j + 1,j}|)} \le 1$.

The index $j$ ranges from $l_1$ to $m - 1$ because only the columns after the $(l_1-1)$-st need to be adjusted, with the exception of the last one, which does not violate the structure of a lower triangular matrix.
Hence,
\begin{equation*}
  \U^{(1)} = \bs{\Gamma}^{(1)}_{m-1}\bs{\Pi}^{(1)}_{m-1}\dots \bs{\Gamma}^{(1)}_{l_1}\bs{\Pi}^{(1)}_{l_1}\bs{H}^{(1)} .
\end{equation*}
To simplify the notation, let
\begin{equation*}
  \bs{C}^{(i)} = \bs{\Pi}^{(i)}_{l_i}\bs{\Gamma}^{(i)-1}_{l_i}\dots \bs{\Pi}^{(i)}_{m-1}\bs{\Gamma}^{(i)-1}_{m-1} ,
\end{equation*}
which is the inverse of the operator we applied to $\bs{H}^{(i)}$ (i.e., $\bs{C}^{(i)}\U^{(i)} = \bs{H}^{(i)}$), generalized to any iteration $i$.
In fact, we can repeat the same steps to update the basis matrix $\B^{(i)}$ to $\B^{(i+1)}$.
For instance, given $\B^{(1)} = \bs{L}\bs{C}^{(1)} \U^{(1)}$, we can pivot $\B^{(1)}$ to $\B^{(2)}$ while keeping $\bs{L}$ and $\bs{C}^{(1)}$ fixed,
yielding
\begin{equation*}
  \B^{(2)} = \bs{L}\bs{C}^{(1)} \bs{H}^{(2)} = \bs{L}\bs{C}^{(1)}\bs{C}^{(2)} \U^{(2)} .
\end{equation*}
Finally, letting $\G^{(i)} = \bs{C}^{(1)} \bs{C}^{(2)} \dots \bs{C}^{(i)}$, at the $i$-th iteration we have
\begin{equation*}
  \B^{(i)} = \bs{L}\bs{C}^{(1)}\dots\bs{C}^{(i)}\U^{(i)} = \bs{L}\G^{(i)}\U^{(i)} .
\end{equation*}

\new{Having fixed the LP formulation, the simplex procedure, and the \bg update, we now turn to the numerical question underlying
  our correctness result: how floating-point errors introduced by the basis
  updates affect the linear systems used in the simplex decisions.
  \autoref{sec:bg-error-analysis} derives the required stability bounds, which
  are then used in \autoref{sec:floating-point-simplex} to establish
  the decision stability of the tolerance-aware floating-point simplex
  algorithm.
}

\section{Stability Analysis of \bg Basis Updates}
\label{sec:bg-error-analysis}

We focus on deriving bounds on the error introduced by the \bg update.
Bartels provides an error analysis of his algorithm in~\cite{ref:stable-simplex,ref:bartels-thesis},
which we update by
(I) removing separate error factors for the terms in additions and subtractions;
(II) using a single unit roundoff value $\epsilon$ (except where the algorithm explicitly changes precision);
(III) providing a rigorous bound for $\|\bs{H}^{(i)}_{:,m}\|_1$, which is only approximated in the original paper.
The differences from the results in~\cite{ref:stable-simplex} can be identified by comparing the bounds in~\autoref{tab:bounds}.

\paragraph{Asymptotic notation.}
Throughout Sections~\ref{sec:bg-error-analysis}--\ref{sec:delta-complete-simplex}, asymptotic bounds are understood as $\epsilon \to 0$.
The \new{\gls{lp} input data and iteration index are treated as constants}. We write $f(\epsilon)=O(g(\epsilon))$ if there exist constants
$K>0$ and $\epsilon_0>0$ such that
\begin{equation*}
  |f(\epsilon)| \leq K |g(\epsilon)|
  \qquad
  \text{for all } 0<\epsilon<\epsilon_0.
\end{equation*}
The notation is applied to scalar quantities and to matrix or vector norms.
In particular, $O(1)+O(\epsilon)=O(1)$, and finite sums and products of $O(\epsilon)$ terms are manipulated according to the usual asymptotic rules.
The hidden constants may depend on the fixed LP instance and iteration index, but not on $\epsilon$.

Using the floating-point standard model (\autoref{def:standard-model}), we study the error introduced by the update of the basis matrix at the $i$-th iteration
\begin{equation}
  \label{eq:lu-error-b}
  \B^{(i)} + \e \B^{(i)} = \bs{L}\G^{(i)}\U^{(i)} ,
\end{equation}
where $\e \B^{(i)}$ is simply the difference between the value of $\B^{(i)}$ and the true value of $\bs{L}\G^{(i)}\U^{(i)}$,
computed exactly, but using the inexact stored values of $\bs{L}$, $\G^{(i)}$ and $\U^{(i)}$.
There is no need to consider the error introduced by the matrix multiplication between $\bs{L}$, $\G^{(i)}$ and $\U^{(i)}$ since it is never actually computed.
Solving a generic linear system
\begin{equation}
  \label{eq:linear-system-error}
  (\B^{(i)} + \Epsilon^{(i)})\x = \bs{b}, \qquad \x, \bs{b} \in \new{\Q}^m
\end{equation}
using the \bg update translates to solving the systems
\begin{align}
  \label{eq:linear-systems-error-l}
  (\bs{L} + \e \bs{L})\bs{y}     & = \bs{b}   \\
  \label{eq:linear-systems-error-g}
  (\G^{(i)} + \e \G^{(i)})\bs{z} & = \bs{y}   \\
  \label{eq:linear-systems-error-u}
  (\U^{(i)} + \e \U^{(i)})\x     & = \bs{z} .
\end{align}
Substitution and expansion of the equations in \eqref{eq:linear-systems-error-l}, \eqref{eq:linear-systems-error-g}, \eqref{eq:linear-systems-error-u}, followed by substitution in \eqref{eq:lu-error-b} to eliminate the term $\bs{L}\G^{(i)}\U^{(i)}$,
and then collection of all matrix terms apart from $\B^{(i)}$ into the single combined error term $\Epsilon^{(i)}$, yields
\begin{equation}
  \begin{aligned}
    \label{eq:define-e}
    \Epsilon^{(i)} & = \e\B ^{(i)} + \bs{L}\G^{(i)}\e\U ^{(i)} + \bs{L}\e\G^{(i)}\U^{(i)} + \e\bs{L}\G^{(i)}\U^{(i)} + \bs{L}\e\G^{(i)} \e\U^{(i)} \\
                   & + \e\bs{L}\G^{(i)} \e\U^{(i)} + \e\bs{L}\e\G^{(i)}\U^{(i)} + \e\bs{L}\e\G^{(i)}\e\U^{(i)} ,
  \end{aligned}
\end{equation}
which is the a priori backward error.
All proofs are given in \aref{app:proofs}, but we summarize the results in~\autoref{tab:bounds}.
Note that $i > 0$.

\begingroup
\renewcommand{\arraystretch}{1.5}
\begin{table}[t]
  \centering
  \ifincludeappendix
    \begin{tabular}{|c|c|c|c|}
      \hline
      \textbf{Reference}   & \textbf{Value}                    & \textbf{Upper Bound}                                                                                                                                                                                                        & \textbf{Big $O$ } \\
      \hline
      \eqref{eq:bound-l}   & $\|\bs{L}\|_1$                    & $m(1 + \epsilon)$                                                                                                                                                                                                           & $O(1)$            \\
      \hdashline
      \eqref{eq:bound-dl}  & $\|\e\bs{L}\|_1$                  & $\gamma_m \| \bs{L}\|_1$                                                                                                                                                                                                    & $O(\epsilon)$     \\
      \hdashline
      \eqref{eq:bound-gli} & $\|\bs{G}^{(i)-1}\bs{L}^{-1}\|_1$ & $m 2^{m + n - 1}(1 + \epsilon)^{3(m + n) - 2}$                                                                                                                                                                              & $O(1)$            \\
      \hdashline
      \eqref{eq:bound-him} & $\|\bs{H}^{(i)}_{:,m}\|_1$        & $(1 + \epsilon)^{m}\|\G^{(i)-1}\bs{L}^{-1}\|_1\|(\A_{:,e_i})\|_1$                                                                                                                                                           & $O(1)$            \\
      \hdashline
      \eqref{eq:bound-u0}  & $\|\bs{U}^{(0)}\|_1$              & $ m(2(1 + \epsilon)^3)^{m-1}\|A\|_{1}$                                                                                                                                                                                      & $O(1)$            \\
      \hdashline
      \eqref{eq:bound-ui}  & $\|\bs{U}^{(i)}\|_1$              & $m^2(1 + \epsilon)^{3(m-1)}\|\bs{H}^{(i)}\|_{1}$                                                                                                                                                                            & $O(1)$            \\
      \hdashline
      \eqref{eq:bound-dui} & $\|\e\bs{U}^{(i)}\|_1$            & $\gamma_m \|\U^{(i)}\|$                                                                                                                                                                                                     & $O(\epsilon)$     \\
      \hdashline
      \eqref{eq:bound-dci} & $\|\e\bs{C}^{(i)}\|_1$            & $2m((1 + \epsilon)^{m-1} - 1)$                                                                                                                                                                                              & $O(\epsilon)$     \\
      \hdashline
      \eqref{eq:bound-gi}  & $\|\G^{(i)}\|_1$                  & $m^i$                                                                                                                                                                                                                       & $O(1)$            \\
      \hdashline
      \eqref{eq:bound-dgi} & $\|\e\G^{(i)}\|_1$                & $2im^i(1 + \epsilon)^{(i-1)(m - 1)}((1 + \epsilon)^{m-1} - 1)$                                                                                                                                                              & $O(\epsilon)$     \\
      \hdashline
      \eqref{eq:bound-dai} & $\|\e\bs{A}_{e_i}\|_1$            & $\begin{aligned} (m \|\e\G^{(i)}\|_1 + m^i\|\e\bs{L}\|_1 \\ + \|\e\bs{G}^{(i)}\|_1\|\e\bs{L}\|_1) \|\bs{H}^{(i+1)}_{:,m}\|_1 \end{aligned}$                                                                                 & $O(\epsilon)$     \\
      \hdashline
      \eqref{eq:bound-db0} & $\|\e\B^{(0)}\|_1$                & $\gamma_m 2^{m-1}m^2(1 + \epsilon)^{3m - 2}\|A\|_{1}$                                                                                                                                                                       & $O(\epsilon)$     \\
      \hdashline
      \eqref{eq:bound-dbi} & $\|\e\B^{(i)}\|_1$                & $\begin{aligned} \max(\|\e\B^{(i-1)}\|_1, \|\e\bs{A}_{e_{i}}\|_1) + \\ \|\bs{L}\|_1\|\G^{(i)}\|_1\|\e\bs{C}^{(i)}\|_1\|\U^{(i)}\|_1 \end{aligned}$ & $O(\epsilon)$     \\
      \hline
    \end{tabular}
  \else
    \begin{tabular}{|c|c|c|}
      \hline
      \textbf{Value}                    & \textbf{Upper Bound}                                                                                                                                                                                                        & \textbf{Big $O$ } \\
      \hline
      $\|\bs{L}\|_1$                    & $m(1 + \epsilon)$                                                                                                                                                                                                           & $O(1)$            \\
      \hdashline
      $\|\e\bs{L}\|_1$                  & $\gamma_m \| \bs{L}\|_1$                                                                                                                                                                                                    & $O(\epsilon)$     \\
      \hdashline
      $\|\bs{G}^{(i)-1}\bs{L}^{-1}\|_1$ & $m 2^{m + n - 1}(1 + \epsilon)^{3(m + n) - 2}$                                                                                                                                                                              & $O(1)$            \\
      \hdashline
      $\|\bs{H}^{(i)}_{:,m}\|_1$        & $(1 + \epsilon)^{m}\|\G^{(i)-1}\bs{L}^{-1}\|_1\|(\A_{:,e_i})\|_1$                                                                                                                                                           & $O(1)$            \\
      \hdashline
      $\|\bs{U}^{(0)}\|_1$              & $ m(2(1 + \epsilon)^3)^{m-1}\|A\|_{1}$                                                                                                                                                                                      & $O(1)$            \\
      \hdashline
      $\|\bs{U}^{(i)}\|_1$              & $m^2(1 + \epsilon)^{3(m-1)}\|\bs{H}^{(i)}\|_{1}$                                                                                                                                                                            & $O(1)$            \\
      \hdashline
      $\|\e\bs{U}^{(i)}\|_1$            & $\gamma_m \|\U^{(i)}\|$                                                                                                                                                                                                     & $O(\epsilon)$     \\
      \hdashline
      $\|\e\bs{C}^{(i)}\|_1$            & $2m((1 + \epsilon)^{m-1} - 1)$                                                                                                                                                                                              & $O(\epsilon)$     \\
      \hdashline
      $\|\G^{(i)}\|_1$                  & $m^i$                                                                                                                                                                                                                       & $O(1)$            \\
      \hdashline
      $\|\e\G^{(i)}\|_1$                & $2im^i(1 + \epsilon)^{(i-1)(m - 1)}((1 + \epsilon)^{m-1} - 1)$                                                                                                                                                              & $O(\epsilon)$     \\
      \hdashline
      $\|\e\bs{A}_{e_i}\|_1$            & $\begin{aligned} (m \|\e\G^{(i)}\|_1 + m^i\|\e\bs{L}\|_1 \\ + \|\e\bs{G}^{(i)}\|_1\|\e\bs{L}\|_1) \|\bs{H}^{(i+1)}_{:,m}\|_1 \end{aligned}$                                                                                 & $O(\epsilon)$     \\
      \hdashline
      $\|\e\B^{(0)}\|_1$                & $\gamma_m 2^{m-1}m^2(1 + \epsilon)^{3m - 2}\|A\|_{1}$                                                                                                                                                                       & $O(\epsilon)$     \\
      \hdashline
      $\|\e\B^{(i)}\|_1$                & $\begin{aligned} \max(\|\e\B^{(i-1)}\|_1, \|\e\bs{A}_{e_{i}}\|_1) + \\ \|\bs{L}\|_1\|\G^{(i)}\|_1\|\e\bs{C}^{(i)}\|_1\|\U^{(i)}\|_1 \end{aligned}$ & $O(\epsilon)$     \\
      \hline
    \end{tabular}
  \fi
  \caption{
    Summary of the bounds on the errors introduced by the \bg update.
    Formal proofs are given in \aref{app:proofs}.
  }
  \label{tab:bounds}
\end{table}
\endgroup
The last column of~\autoref{tab:bounds} expresses the bounds we are looking for in big $O$ notation, as $\epsilon \to 0$.
From these results, we conclude that $\|\Epsilon^{(i)}\|_1 = O(\epsilon)$.
This result is crucial for proving the stability of a floating-point implementation of the simplex using the \bg update.
In fact, as long as the number of iterations $i$ is relatively small, the bound for $\Epsilon^{(i)}$ is the best we can expect, being proportional to the unit roundoff $\epsilon$.
After sufficiently many iterations, however, we are forced to re-factorize the basis to avoid catastrophic error accumulation.

\section{Tolerance-Aware Floating-Point Simplex}
\label{sec:floating-point-simplex}

\new{Section~\ref{sec:bg-error-analysis} established that the errors introduced by the
  \bg update vanish with the unit roundoff $\epsilon$. We now use
  these bounds to analyze the discrete decisions of the simplex method,
  including optimality, unboundedness, and pivot selection.
  Since these decisions compare computed quantities against numerical thresholds,
  correctness requires the tolerance $\tau$ to be chosen consistently with $\epsilon$.}
\autoref{alg:stable-simplex} gives the pseudocode implementation of a floating-point primal simplex algorithm using the \bg update~\cite{ref:stable-simplex} and Bland’s pivot rule~\cite{ref:bland}.
Note that~\autoref{alg:stable-simplex} requires a feasible basis to start with, but this limitation is addressed in the $\delta$-complete algorithm.

\begin{algorithm}[h]
  \caption{Floating-point primal simplex algorithm using the \bg update~\cite{ref:stable-simplex} and Bland’s pivot rule~\cite{ref:bland}}
  \label{alg:stable-simplex}
  \SetAlgoVlined
  \KwInput{%
    $\bs{A} \in \Q^{m \times n}$, $\bs{b} \in \Q^m$, $\bs{c} \in \Q^{n}$ \newline
    Initial feasible basis indices $\base \in \N^{m}$ \newline
    Floating-point precision with unit roundoff $\epsilon \in \Q$, $\epsilon > 0$ \newline
    Tolerance $\tau \in \Q$, $\tau > \epsilon$}
  \KwOutput{Result symbol $\in \{\underline{optimal}, \underline{unbounded}\}$ \newline
    Final basis indices $\base$ and entering variable index $e \in \N$}
  $\hbs{A} \gets \fl{\bs{A}}$ \Comment*{Rounding inputs to floating-point}
  $\hbs{b} \gets \fl{\bs{b}}$ \;
  $\hbs{c} \gets \fl{\bs{c}}$ \;
  \new{
    $\hbs{B} \gets \hbs{A}_{\base}$ \Comment*{Construct the basis from $\hbs{A}$ using the indices in $\base$}
    $\B \coloneqq \bs{L}\I\bs{U} \gets \text{Fact}(\hbs{B})$ \Comment*{LU factorization of $\hbs{B}$. $\B$ is an alias}
  }
  \For{$k \gets 1$ \textbf{to} $\maxiterations$}{
    $\hbs{z}_{\base} \gets \fl{\B^{-1}\hbs{b}}$ \Comment*{Primal solution}
    $\hbs{y} \gets \fl{\B^{-T}\hbs{c}_{\base}}$ \Comment*{Dual solution}
    $\hbs{r} \gets \fl{\hbs{c} - \hbs{A}^{T}\hbs{y}}$ \Comment*{Reduced costs}
    \If{$\hbs{r} \geq -\tau \1_{n}$}{
      \label{line:stable-optimality}
      \Return \underline{optimal}, $\base$, -1 \Comment*{The value of $e$ is irrelevant}
    }
    $e \gets \min_{\hbs{r}_e < -\tau}(e)$ \Comment*{Entering variable (Bland's rule)}
    \label{line:stable-entering}
    $\hbs{d} \gets \fl{\B^{-1} \hbs{A}_{:, e}}$ \Comment*{Entering variable coefficient vector}
    \If {$\hbs{d} \le \tau \1$}{
      \label{line:stable-unbounded}
      \Return \underline{unbounded}, $\base$, $e$ \;
    }
    $\hat{u}_l \gets \begin{cases}
        \fl{\hat{z}_{\base_l} / \hat{d}_l} & \text{if } \hat{d}_l > \tau \\
        0                                  & \text{otherwise}
      \end{cases} \quad 1 \le l \le m$ \Comment*{Update candidates}
    $l \gets \min( \argmin_{\hat{d}_l > \tau, l \in \{1, 2, \dots, m\}}(\hat{u}_l))$ \Comment*{Leaving variable}
    \label{line:stable-leaving}
    $\base \gets [\base_{1}, \dots, \base_{l-1} , \base_{l+1}, \dots, \base_{m}, e]$ \;
    $\B \coloneqq \bs{L}\bs{G}\bs{U} \gets \text{BG}(\B, \hbs{A}, l)$ \Comment*{\bg update}
  }

\end{algorithm}

\subsection{Error Bounds for Simplex Computations}
\label{sec:error-bounds-simplex}
During each iteration of the algorithm, the current basis matrix $\B$ is updated as described in~\autoref{sec:stable-simplex}. %
In a real-world implementation, $\bs{L}$ and $\U$ would be re-factorized periodically, absorbing the factors from $\G$, \new{improving the numerical stability of the algorithm}.
To keep the proofs simple, \new{this step is omitted from~\autoref{alg:stable-simplex}, since it does not affect the asymptotic bounds on the errors.}

The procedure encounters four sources of error at the $i$-th iteration:
the initial rounding error introduced when storing the input parameters $\B, \bs{b}, \bs{c}$;
the update error $\e\B$;
the error $\e\U$ ($\e\bs{L}$) due to backward (or forward) substitution;
and the error $\e\G$ from applying the update factors.
Using the results proven in the following subsections, we can list a set of conditions on a positive tolerance $\tau$ that, when satisfied, ensure that \autoref{alg:stable-simplex} behaves correctly at any iteration:
\begin{enumerate}
  \item $\tau < -\rmax / 2$ and $\tau < \dmin / 2$ and $\tau < \umin$ \label{cond:tau-upper-bound}
  \item $\|\e\bs{r}\|_1 \le \tau$ and $\|\e\bs{d}\|_1 \le \tau$ and $\|\e\bs{u}\|_1 \le \min(\tau, \umin - \tau) / 2$ \label{cond:bound-errors}
\end{enumerate}
where
\begin{itemize}
  \item $\rmax$ is the maximum (i.e., least negative) value of $r_j$ across all bases and all $1 \le j \le n$ of the exact reduced cost $r_j = \bs{c}_j - (\bs{A}^T\B^{-T}\cb)_j$;
  \item $\dmin$ is the minimum positive value of $d_j$ across all bases and all $1 \le j \le m$ of the exact entering variable coefficient $d_j = (\B^{-1}\bs{A}_{:,e})_j$;
  \item $\umin$ is the smallest difference between the second-smallest and smallest distinct exact values of $u_j = \zbi[j]/d_j$ over $1 \le j \le m$ where $d_j > 0$.
\end{itemize}
Note that the values of $\rmax, \dmin$ and $\umin$ are entirely dependent on the input data $\A, \bs{b}, \bs{c}$,
but we do not compute them explicitly.
Because there are only a finite number of admissible bases, all three values are well defined.
It is clear that there exists a value $\tstar$ such that positive $\tau \le \tstar$ will satisfy condition~\ref{cond:tau-upper-bound}.
For any such $\tau$ there exists a value $\estar$ small enough to enable $\|\e\bs{r}\|_1, \|\e\bs{d}\|_1$ and $\|\e\bs{u}\|_1$ to satisfy condition~\ref{cond:bound-errors}.
We are not aware of any efficiently-computable non-trivial bounds for $\tstar$ and $\estar$ for a given problem instance.
In practice, the simple heuristic of starting with an arbitrary $\tau$ and $\epsilon$ and decreasing them until the algorithm behaves correctly is sufficient to solve most problems efficiently. %
Due to space limitations, we only provide sketched proofs for most theorems and lemmas.
The interested reader can find the rigorous proofs in \aref{app:proofs}.

To provide a rigorous bound on the error of~\autoref{alg:stable-simplex}, we start by analyzing the error of its intermediate steps.

\paragraph{\textbf{Initial Rounding Error}}
To store them in memory, the input parameters $\A, \bs{b}$ and $\bs{c}$ are converted into dyadic numbers with the appropriate precision.
After rounding, the resulting vectors $\hbs{b}, \hbs{c}$ and $\hbs{A}$ will be such that
\begin{equation}
  \label{eq:initial-rounding-error}
  \begin{aligned}
     & \hbs{A} = \fl{\bs{A}} = \bs{A} + \e \bs{A} & \qquad \|\e \bs{A}\|_1 \le \epsilon\|\bs{A}\|_1 = O(\epsilon)   \\
     & \hbs{b} = \fl{\bs{b}} = \bs{b} + \e \bs{b} & \qquad \|\e \bs{b}\|_1 \le \epsilon\|\bs{b}\|_1 = O(\epsilon)   \\
     & \hbs{c} = \fl{\bs{c}} = \bs{c} + \e \bs{c} & \qquad \|\e \bs{c}\|_1 \le \epsilon\|\bs{c}\|_1 = O(\epsilon) .
  \end{aligned}
\end{equation}
Here, the notation $\fl{\cdot}$ simply indicates the conversion of all inputs to floating-point values with unit roundoff $\epsilon$.
We can obtain the basis $\hbs{B}$ from $\hbs{A}$ by selecting a subset of its columns indexed by $\base$.
The error in $\hbs{B}$ is then given by $\e\B = \hbs{B} - \B$, where $\B$ contains the same columns, but taken from $\A$.

\begin{lemma}[Linear system error bounds]
  \label{lm:errors-linear-systems}
  Let $\hbs{b}$ be defined as in~\eqref{eq:initial-rounding-error}, $\x = \B^{-1}\bs{b}$, $\hbs{x} = \fl{\B^{-1}\bs{b}}$, where $\B$ is the basis matrix and $\hbs{x} = \x + \e\x$.
  Then
  \begin{equation*}
    \|{\e\x}\|_1 = O(\epsilon) \text{ as } \epsilon \to 0 .
  \end{equation*}
\end{lemma}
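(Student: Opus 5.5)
We combine the backward-error representation from \autoref{sec:bg-error-analysis} with a standard perturbation argument for nonsingular linear systems. The computed vector $\hbs{x}$ is the exact solution of a perturbed system. It is obtained by forward substitution with $\bs{L}$, application of the stored update factors $\G^{(i)}$, and backward substitution with $\U^{(i)}$, all applied to the rounded right-hand side $\hbs{b} = \bs{b} + \e\bs{b}$. Substituting \eqref{eq:linear-systems-error-l}--\eqref{eq:linear-systems-error-u} into \eqref{eq:lu-error-b}, as in the derivation of \eqref{eq:define-e}, gives
\begin{equation*}
  (\B + \e\B_{\mathrm{in}} + \Epsilon^{(i)})\,\hbs{x} = \bs{b} + \e\bs{b},
\end{equation*}
where $\e\B_{\mathrm{in}}$ is the initial rounding error of the basis columns taken from \eqref{eq:initial-rounding-error}. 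The first step is to record that every perturbation here is $O(\epsilon)$. For $\e\bs{b}$ and $\e\B_{\mathrm{in}}$ this follows directly from \eqref{eq:initial-rounding-error}. For $\Epsilon^{(i)}$ it follows from \autoref{tab:bounds}: each of the eight terms of \eqref{eq:define-e} is a product of factors whose norms are $O(1)$ (namely $\bs{L}$, $\G^{(i)}$, $\U^{(i)}$) with at least one factor whose norm is $O(\epsilon)$ (namely $\e\B^{(i)}$, $\e\bs{L}$, $\e\G^{(i)}$, $\e\U^{(i)}$). By submultiplicativity of $\|\cdot\|_1$, the total is $O(\epsilon)$.

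Next, we write $\bs{F} = \e\B_{\mathrm{in}} + \Epsilon^{(i)}$, so that $\|\bs{F}\|_1 = O(\epsilon)$. Subtracting $\B\x = \bs{b}$ from the perturbed system gives $\B\,\e\x = \e\bs{b} - \bs{F}\hbs{x}$. This yields
\begin{equation*}
  \e\x = \B^{-1}(\e\bs{b} - \bs{F}\x) - \B^{-1}\bs{F}\,\e\x .
\end{equation*}
Since $\B$ is an exact nonsingular basis of the fixed LP instance, $\|\B^{-1}\|_1$ is a constant independent of $\epsilon$. The same holds uniformly over the finitely many admissible bases, and $\|\x\|_1$ is likewise a constant. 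Taking norms gives
\begin{equation*}
  \bigl(1 - \|\B^{-1}\|_1\|\bs{F}\|_1\bigr)\|\e\x\|_1 \le \|\B^{-1}\|_1\bigl(\|\e\bs{b}\|_1 + \|\bs{F}\|_1\|\x\|_1\bigr).
\end{equation*}
We then choose $\epsilon_0$ small enough that $\|\B^{-1}\|_1\|\bs{F}\|_1 \le 1/2$ for all $\epsilon<\epsilon_0$. This is possible because $\|\bs{F}\|_1 \le K\epsilon$. It follows that $\|\e\x\|_1 \le 2\|\B^{-1}\|_1(\|\e\bs{b}\|_1 + \|\bs{F}\|_1\|\x\|_1) = O(\epsilon)$. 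This is the standard Neumann-series argument, equivalently the fact that $\B+\bs{F}$ stays nonsingular with $\|(\B+\bs{F})^{-1}\|_1 \le 2\|\B^{-1}\|_1$.

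The main obstacle is justifying that the backward-error model genuinely captures the computed $\hbs{x}$. First, the triangular solves and the applications of the $\bs{C}^{(j)}$ factors must each be backward stable in the sense of \eqref{eq:linear-systems-error-l}--\eqref{eq:linear-systems-error-u}, with perturbations depending on the right-hand side but bounded uniformly. For triangular substitution this is the classical componentwise result $|\e\bs{T}| \le \gamma_m|\bs{T}|$, which is already encoded in \autoref{tab:bounds}. For $\G^{(i)}$ it follows from the $\|\e\bs{C}^{(i)}\|_1$ and $\|\e\G^{(i)}\|_1$ bounds. Second, the hidden constants must be uniform. These constants involve $\|\B^{-1}\|_1$ for the current basis and the iteration index $i \le \maxiterations$, and both range over finite sets. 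The asymptotic convention adopted at the start of \autoref{sec:bg-error-analysis} makes this legitimate. The same argument applies verbatim to $\B^{-T}$, for the dual vector, and to $\B^{-1}\hbs{A}_{:,e}$, with $\e\bs{A}$ in place of $\e\bs{b}$.
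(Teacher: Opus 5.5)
Your proof is correct and follows essentially the same route as the paper: you write $\hbs{x}$ as the exact solution of a system perturbed by an $O(\epsilon)$ backward error, subtract $\B\x=\bs{b}$ to isolate $\e\x$, and control the $\B^{-1}\Epsilon\,\e\x$ term for small $\epsilon$. Your absorption step with $\|\B^{-1}\|_1\|\bs{F}\|_1\le 1/2$ is the norm form of the paper's Neumann-series bound on $(\bs{I}+\B^{-1}\Epsilon)^{-1}$, and is slightly cleaner because it needs no separate invertibility argument; your explicit $\e\B_{\mathrm{in}}$ term is a harmless refinement of what the paper folds into $\Epsilon$.
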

\begin{proofsketch}
  The computed solution satisfies the system $(\B+\Epsilon)(\x+\e\x)=\bs{b}+\e\bs{b}$.
  The error analysis in~\autoref{sec:bg-error-analysis} ensures that $\|\Epsilon\|_1=O(\epsilon)$ and the initial conversion gives $\|\e\bs{b}\|_1=O(\epsilon)$. Rearranging yields
  \begin{equation*}
    (\bs{I}+\B^{-1}\Epsilon)\e\x
    =\B^{-1}(\e\bs{b}-\Epsilon\x).
  \end{equation*}
  For sufficiently small $\epsilon$, $\bs{I}+\B^{-1}\Epsilon$ is invertible and its inverse is $O(1)$ (Neumann-series).
  The right-hand side is $O(\epsilon)$, therefore $\|\e\x\|_1=O(\epsilon)$.
\end{proofsketch}

Applying~\autoref{lm:errors-linear-systems} to the operations in~\autoref{alg:stable-simplex}, we obtain $\|\e\bs{z}_B\|_1 = O(\epsilon)$, $\|\e\bs{d}\|_1 = O(\epsilon)$ and, by replacing $\B$ with $\B^T$, $\|\e\y\|_1 = O(\epsilon)$.

\begin{lemma}[Reduced costs error bounds]
  \label{lm:errors-reduced-costs}
  Let $\hbs{c}$ be defined as in~\eqref{eq:initial-rounding-error}, $\hbs{r} = \fl{\hbs{c} - \hbs{A}^{T}\hbs{y}}$ and $ \hbs{y} = \y + \e\y$ where $\y = \B^{-T}\cb$ and $\hbs{y} = \fl{\B^{-T}\cb}$.
  If we define $\hbs{r} = \bs{r} + \e\bs{r}$ and $\bs{r} = \bs{c} - \A^{T}\y$, then
  \begin{equation*}
    \|\e\bs{r}\|_1 = O(\epsilon) \text{ as } \epsilon \to 0 .
  \end{equation*}
\end{lemma}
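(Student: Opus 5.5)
The proof splits $\e\bs{r} = \hbs{r} - \bs{r}$ into three contributions: the rounding performed while evaluating $\fl{\hbs{c} - \hbs{A}^T\hbs{y}}$, the input perturbations $\e\bs{c}$ and $\e\bs{A}$ from \eqref{eq:initial-rounding-error}, and the error $\e\y$ in the dual solution. For the last one we use \autoref{lm:errors-linear-systems} applied to $\B^T$, as already remarked after that lemma, which gives $\|\e\y\|_1 = O(\epsilon)$. In particular $\|\hbs{y}\|_1 \le \|\y\|_1 + O(\epsilon) = O(1)$, since $\y = \B^{-T}\cb$ depends only on the fixed LP data and basis. The right-hand side used in practice is $\hbs{c}_{\base}$ rather than $\cb$. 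This only adds a perturbation of size $\epsilon\|\bs{c}\|_1$ to the right-hand side, which the proof of \autoref{lm:errors-linear-systems} already absorbs, playing the role of $\e\bs{b}$ there.

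Next, treat the floating-point evaluation itself. Each component $\hat r_j = \fl{\hat c_j - \sum_{k=1}^m \hat A_{k,j}\hat y_k}$ is an inner product followed by a subtraction, involving at most $m+1$ rounded operations. The standard model (\autoref{def:standard-model}) and the $\gamma_n$ lemma then give the classical inner-product bound
\begin{equation*}
  \hat r_j = \hat c_j - (\hbs{A}^T\hbs{y})_j + \eta_j, \qquad |\eta_j| \le \gamma_{m+1}\bigl(|\hat c_j| + (|\hbs{A}|^T|\hbs{y}|)_j\bigr).
\end{equation*}
Summing over $j$ yields $\|\bs{\eta}\|_1 \le \gamma_{m+1}(\|\hbs{c}\|_1 + n\|\hbs{A}\|_{\max}\|\hbs{y}\|_1)$. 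This is $O(\epsilon)$, because $\gamma_{m+1} = O(\epsilon)$ and $\|\hbs{c}\|_1$, $\|\hbs{A}\|_{\max}$ and $\|\hbs{y}\|_1$ are all $O(1)$.

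Finally, expand the exact part:
\begin{equation*}
  \hbs{c} - \hbs{A}^T\hbs{y} - \bs{r} = \e\bs{c} - \e\bs{A}^T\y - \bs{A}^T\e\y - \e\bs{A}^T\e\y .
\end{equation*}
Each term is bounded by a product of norms. First, $\|\e\bs{c}\|_1 = O(\epsilon)$. Second, $\|\e\bs{A}^T\y\|_1$ and $\|\bs{A}^T\e\y\|_1$ are each at most a dimension-dependent constant times $\|\e\bs{A}\|_1\|\y\|_1$, respectively $\|\bs{A}\|_1\|\e\y\|_1$, hence $O(\epsilon)$. We can use $\|\bs{M}^T\|_1 \le n\|\bs{M}\|_1$, since norm equivalence in finite dimension only costs constants depending on $m,n$. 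Third, the cross term $\e\bs{A}^T\e\y$ is $O(\epsilon^2)$. Combining this with $\bs{\eta}$ gives $\e\bs{r} = \e\bs{c} - \e\bs{A}^T\y - \bs{A}^T\e\y - \e\bs{A}^T\e\y + \bs{\eta}$, so $\|\e\bs{r}\|_1 = O(\epsilon)$.

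The only delicate step is making sure $\|\e\y\|_1 = O(\epsilon)$ really holds for the transposed solve through the \bg factors. Solving with $\B^T = \U^T\G^T\bs{L}^T$ reverses the order of the triangular and update solves, so the backward error is the transpose of $\Epsilon^{(i)}$ in \eqref{eq:define-e}. Since $\|\Epsilon^{(i)T}\|_1 \le m\|\Epsilon^{(i)}\|_{1}$ up to constants, it is still $O(\epsilon)$ by the bounds in \autoref{tab:bounds}. After that, the Neumann-series argument of \autoref{lm:errors-linear-systems} goes through verbatim.
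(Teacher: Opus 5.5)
Your proof is correct and follows essentially the same route as the paper's: expand $\hat r_j$ componentwise, separate the rounding of $\bs{c}$ and $\A$, the linear-solve error $\e\y$ from \autoref{lm:errors-linear-systems} (applied to $\B^T$), and the rounding in the final dot product and subtraction, then observe that each contribution is $O(\epsilon)$ times quantities fixed by the LP and the basis. Your treatment of the last contribution via the absolute bound $|\eta_j| \le \gamma_{m+1}(|\hat c_j| + (|\hbs{A}|^T|\hbs{y}|)_j)$ is sounder than the paper's, which puts a single relative factor $(1+\phi_j)$ on the whole difference $\hat c_j - \hbs{A}^T_{:,j}\hbs{y}$; that model does not hold in general when the difference suffers cancellation, and you also make explicit the $\hbs{c}_{\base}$-versus-$\cb$ and transposed-solve points that the paper passes over.
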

\begin{proofsketch}
  For $1 \le j \le n $, expand $\hat r_j=\fl{\hat c_j-\hbs{A}_{:,j}^{T}\hbs{y}}$. The resulting error is a sum of terms caused by rounding $\bs{c}$ and $\A$, the linear-solve error $\e\y$, and the final dot product/subtraction. By~\eqref{eq:initial-rounding-error} and~\autoref{lm:errors-linear-systems}, each perturbation is $O(\epsilon)$; all remaining factors are fixed for a given basis. Thus, $\|\e\bs{r}\|_1=O(\epsilon)$.
\end{proofsketch}

\begin{lemma}[Update vector error bounds]
  \label{lm:errors-update}
  Let $\hat{u}_j = \fl{\hat{z}_{\base_j}/\hat{d}_j}$ wherever $\hat{d}_j > 0$ and $\hat{u} = 0$ otherwise for $1 \le j \le m$.
  Similarly, let $u_j = z_{\base_j}/d_j$ wherever $d_j > 0$ and $u_j = 0$ otherwise.
  If $\hbs{z}_{\base} = \fl{\B^{-1}\hbs{b}}$ and $\hbs{d} = \fl{\B^{-1}\hbs{A}_{:,e}}$ using the same notation as in~\eqref{eq:initial-rounding-error},
  and $\hbs{u} = \bs{u} + \e\bs{u}$, then
  \begin{equation*}
    \|\e\bs{u}\|_1 = O(\epsilon) \text{ as } \epsilon \to 0 .
  \end{equation*}
\end{lemma}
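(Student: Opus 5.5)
We apply \autoref{lm:errors-linear-systems} to the two linear solves feeding $\hbs{u}$: once with right-hand side $\hbs{b}$, giving $\hbs{z}_{\base} = \bs{z}_{\base} + \e\bs{z}_{\base}$ with $\|\e\bs{z}_{\base}\|_1 = O(\epsilon)$; and once with right-hand side $\hbs{A}_{:,e}$, whose rounding error is $O(\epsilon)$ by~\eqref{eq:initial-rounding-error}, giving $\hbs{d} = \bs{d} + \e\bs{d}$ with $\|\e\bs{d}\|_1 = O(\epsilon)$. Since $\|\e\bs{u}\|_1 = \sum_{j=1}^m |\e u_j|$ and $m$ is fixed, it suffices to show $|\e u_j| = O(\epsilon)$ for each $j$.

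Fix $j$ and split by the sign of the exact quantity $d_j$. When $d_j > 0$, the basis is one of finitely many, so $d_j \ge \dmin > 0$. For $\epsilon$ below a threshold we have $|\e d_j| \le \dmin/2$, hence $\hat d_j \ge \dmin/2 > 0$, and the computed candidate is $\hat u_j = (\hat z_{\base_j}/\hat d_j)(1+\e)$ with $|\e|\le\epsilon$ by \autoref{def:standard-model}. We then write
\begin{equation*}
  \hat u_j - u_j = \frac{\hat z_{\base_j}}{\hat d_j}\,\e + \frac{\e z_{\base_j}\, d_j - z_{\base_j}\,\e d_j}{\hat d_j\, d_j} .
\end{equation*}
The denominators are bounded below by $\dmin^2/2$ and the numerators by $O(1)\cdot O(\epsilon)$, since $|z_{\base_j}|$ and $|d_j|$ are fixed constants of the instance. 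This gives $|\e u_j| = O(\epsilon)$.

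When $d_j \le 0$, we have $u_j = 0$. If also $\hat d_j \le 0$ (or, as in \autoref{alg:stable-simplex}, $\hat d_j \le \tau$), then $\hat u_j = 0$ and the error vanishes. The remaining case is the delicate one: $d_j \le 0$ but $\hat d_j > 0$, which can happen only when $d_j = 0$ exactly and the error pushes $\hat d_j$ slightly positive. Then $\hat u_j$ is a quotient by a tiny number and need not be small. This is the main obstacle. I would resolve it by reading the statement's ``$\hat d_j > 0$'' consistently with the algorithm's threshold, so that $\hat u_j$ is defined only where $\hat d_j > \tau$. Once $|\e d_j| = O(\epsilon) < \tau$, this guarantees $\hat d_j > \tau$ only if $d_j > 0$. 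I would state explicitly that this is the convention used (the ratio test in \autoref{alg:stable-simplex} only consults indices with $\hat d_l > \tau$). The sign cases are then matched for all sufficiently small $\epsilon$, and the first case gives the bound.

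Finally, the constant $K$ is taken as the maximum over the finitely many $j$ and bases, and $\epsilon_0$ as the minimum of the thresholds used ($|\e d_j| \le \dmin/2$, and $|\e d_j| < \tau$ for fixed $\tau$). Summing over $j$ then yields $\|\e\bs{u}\|_1 = O(\epsilon)$.
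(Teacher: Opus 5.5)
Your argument is essentially the paper's proof. Both get $\|\e\zb\|_1,\|\e\bs{d}\|_1=O(\epsilon)$ from \autoref{lm:errors-linear-systems}, match the index sets of the exact and floating-point ratio tests, and bound the quotient difference using the lower bound $\dmin$ on the positive exact denominators. Your split into a rounding term plus $(\e z_{\base_j} d_j - z_{\base_j}\,\e d_j)/(\hat d_j d_j)$ is just a rearrangement of the paper's single fraction. Your bound $\hat d_j\ge\dmin/2$ is cleaner than the paper's $\dmin-\epsilon$.

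You are also right that the literal condition $\hat d_j>0$ fails when $d_j=0$. The paper resolves this the same way you do, by appealing to the unboundedness part of \autoref{thm:correct-algorithm}, whose proof actually matches $\hat d_j>\tau$ with $d_j>0$.

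What your write-up lacks is the hypothesis on $\tau$ needed for the other direction of the index matching. Under your convention, the case $d_j>0$ needs $\hat d_j>\tau$, not merely $\hat d_j>0$. Shrinking $\epsilon$ alone cannot give this for an arbitrary fixed $\tau$. If $0<d_j<\tau$ for some index of the current basis, then $\hat d_j\to d_j<\tau$, so for all small $\epsilon$ you get $\hat u_j=0$ while $u_j=z_{\base_j}/d_j$. Then $\e u_j=-u_j$ does not vanish, so your claim that ``the sign cases are then matched for all sufficiently small $\epsilon$'' fails.

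You need to assume $\tau<\dmin$. The paper imports $\tau<\dmin/2$ from condition~1 of \autoref{sec:error-bounds-simplex} through its phrase ``assuming $\epsilon$ is small enough to satisfy the conditions in'' \autoref{thm:correct-algorithm}. With $\tau<\dmin/2$, your threshold $|\e d_j|\le\dmin/2$ gives $\hat d_j\ge\dmin/2>\tau$, and the rest of your proof goes through unchanged.
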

\begin{proofsketch}
  For sufficiently small $\epsilon$, the sign test for $d_j$ is correct (as shown in~\aref{lm:unboundedness-determination}), therefore the \new{rational and floating-point} ratio tests consider the same indices.
  On those indices,
  \[
    \hat u_j=(1+\eta_j)\frac{z_{\base_j}+\e z_{\base_j}}{d_j+\e d_j},
    \qquad |\eta_j|\le\epsilon.
  \]
  The positive exact denominators are bounded below by the problem-dependent constant $\dmin>0$.
  Given $\|\e\zb\|_1,\|\e\bs d\|_1=O(\epsilon)$ from~\autoref{lm:errors-linear-systems}, the numerator perturbation is $O(\epsilon)$,
  while $1/(d_j+\e d_j) = O(1)$. Hence $\|\e\bs u\|_1=O(\epsilon)$.
\end{proofsketch}

\subsection{Termination and Correctness}

We can now prove the two fundamental properties of~\autoref{alg:stable-simplex}: termination and correctness.

\begin{lemma}[Termination]
  \label{lm:termination}
  \autoref{alg:stable-simplex} always terminates.
\end{lemma}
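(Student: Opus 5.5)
The proof should be structural and should not depend on any error bounds. \autoref{alg:stable-simplex} consists of a single \texttt{for} loop with the fixed upper limit $\maxiterations$, and every statement inside the loop is a finite computation. So the plan is to show three things: each iteration finishes in finitely many steps, every exit from an iteration is either a \texttt{return} or a fall-through to the next value of $k$, and the loop counter is bounded.

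First, I will check that every operation in an iteration is well defined and finite. There are three forward/backward substitution sequences through $\bs{L}$, $\G$ and $\U$ for $\hbs{z}_{\base}$, $\hbs{y}$ and $\hbs{d}$. There is one matrix--vector product for $\hbs{r}$, followed by $m$ ratio computations and an $\argmin$ over a finite index set. The step most likely to cause trouble is whether the substitutions and the \bg update could break down, for example through a division by a zero diagonal entry of $\U$, which would make an iteration undefined rather than nonterminating.

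I would handle this breakdown case in two parts:
\begin{enumerate}
\item By the standard model (\autoref{def:standard-model}), each floating-point operation returns a value. Even if it did not, the algorithm performs only finitely many arithmetic operations, namely $O(m^2 + mn)$ per iteration plus those of the \bg elimination, which consists of at most $m-1$ elementary steps $\bs{\Pi}_j$, $\bs{\Gamma}_j$. So the execution is finite regardless.
\item The selections on lines~\ref{line:stable-entering} and~\ref{line:stable-leaving} are over nonempty sets whenever they are reached. The entering set $\{e : \hat r_e < -\tau\}$ is nonempty because the optimality test on line~\ref{line:stable-optimality} failed. The leaving set $\{l : \hat d_l > \tau\}$ is nonempty because the unboundedness test on line~\ref{line:stable-unbounded} failed. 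Hence $e$ and $l$ always exist, and the $\min$ taken over the $\argmin$ resolves ties deterministically.
\end{enumerate}

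Second, I will note that each iteration either returns \underline{optimal} or \underline{unbounded}, or performs the basis update and increments $k$. The counter $k$ therefore strictly increases and cannot exceed $\maxiterations$, so the loop executes at most $\maxiterations$ iterations.

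The loop bound itself is enough for termination, but I would also add a remark showing that the bound is not vacuous. There are at most $\binom{n}{m} \le 2^n$ bases. Under the conditions on $\tau$ and $\epsilon$ listed above, the floating-point decisions coincide with the exact ones (via Lemmas~\ref{lm:errors-reduced-costs} and~\ref{lm:errors-update}), and Bland's rule prevents cycling~\cite{ref:bland}. So the algorithm returns before the cap is reached.
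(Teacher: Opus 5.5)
Your argument is correct, but it rests on a different fact from the paper's. The paper's one-line proof attributes the bound of $\maxiterations$ iterations to Bland's rule, via the worst-case simplex analysis. You instead derive termination purely from the structure of \autoref{alg:stable-simplex}: the loop header caps $k$ at $\maxiterations$, each iteration performs finitely many operations, and the selections on lines~\ref{line:stable-entering} and~\ref{line:stable-leaving} range over nonempty sets whenever they are reached.

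Your route gives termination for every pair $(\epsilon,\tau)$, and that is what \autoref{thm:delta-complete-termination} actually needs. \autoref{alg:delta-complete-simplex} also calls \autoref{alg:stable-simplex} with coarse parameters, for which the floating-point decisions need not agree with the exact ones and no anti-cycling theorem applies. Taken literally, the paper's Bland's-rule justification covers only runs that reproduce the exact pivots. What it adds is an explanation of why the cap does not truncate a correct run, and you recover this in your closing remark.

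One caution about that remark, which applies equally to the paper's proof. Bland's anti-cycling theorem requires ties in the ratio test to be broken by the smallest variable index $\base_l$. Line~\ref{line:stable-leaving} instead takes the smallest position $l$ in the basis as reordered by the \bg update. So even the exact counterpart of \autoref{alg:stable-simplex} is not literally Bland's rule. Your main, structural argument is unaffected by this.
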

\begin{proof}
  Due to the use of Bland's rule~\cite{ref:bland}, the number of iterations of the algorithm is bounded by $\maxiterations$~\cite{ref:worst-case-simplex}.
  \qed
\end{proof}

\begin{theorem}[Correctness]
  \label{thm:correct-algorithm}
  \new{Given a standard \gls{lp} problem~\eqref{eq:lp-standard} and feasible} basis indices $\base$, there exists some tolerance $\tstar$ and some precision with unit roundoff $\estar$ such that,
  at any iteration,~\autoref{alg:stable-simplex} given $(\tstar, \estar)$ as input behaves the same
  as the exact version of the algorithm at all decision steps:
  (I) line~\ref{line:stable-optimality} (optimality determination) and~\ref{line:stable-entering} (entering variable choice);
  (II) line~\ref{line:stable-unbounded} (unboundedness determination);
  (III) line~\ref{line:stable-leaving} (leaving variable choice).
\end{theorem}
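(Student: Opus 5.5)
The argument is an induction on the iteration index. The inductive invariant is that, at the start of iteration $k$, the floating-point and exact algorithms hold the same basis index set $\base$. At $k=1$ this holds by hypothesis, since both start from the given feasible basis. If the invariant holds at iteration $k$ and every decision at that iteration agrees, the two algorithms perform the same swap and the invariant holds at $k+1$.

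By \autoref{lm:termination}, at most $\maxiterations$ iterations occur, and there are only finitely many admissible bases. Hence the $O(\epsilon)$ constants of \autoref{lm:errors-linear-systems}, \autoref{lm:errors-reduced-costs} and \autoref{lm:errors-update} can be taken uniformly as the maximum over all bases and iteration indices up to $\maxiterations$. With this uniform constant, we first fix $\tstar>0$ satisfying condition~\ref{cond:tau-upper-bound}, that is, $\tstar<\min(-\rmax/2,\dmin/2,\umin)$. We then choose $\estar$ small enough that the uniform bounds give condition~\ref{cond:bound-errors}: $\|\e\bs r\|_1\le\tstar$, $\|\e\bs d\|_1\le\tstar$ and $\|\e\bs u\|_1\le\min(\tstar,\umin-\tstar)/2$. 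Such an $\estar$ exists because the right-hand sides are positive constants and the left-hand sides are $O(\epsilon)$. We also require $\estar<\tstar$.

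\textbf{(I) Optimality and entering variable.} We show, for every $j$, that $\hat r_j<-\tstar$ holds if and only if $r_j<0$.
\begin{itemize}
\item If $r_j<0$, then $r_j\le\rmax<-2\tstar$, so $\hat r_j\le r_j+\tstar<-\tstar$.
\item If $r_j\ge0$, then $\hat r_j\ge -|\e r_j|\ge-\tstar$.
\end{itemize}
Thus the set of indices passing the test on line~\ref{line:stable-entering} equals the exact set $\{j: r_j<0\}$. It follows that the optimality test on line~\ref{line:stable-optimality} agrees with the exact one, and Bland's minimum index $e$ coincides.

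\textbf{(II) Unboundedness.} The same argument with $\dmin$ applies. If $d_j>0$, then $d_j\ge\dmin>2\tstar$, so $\hat d_j>\tstar$. If $d_j\le0$, then $\hat d_j\le\tstar$. Thus $\{j:\hat d_j>\tstar\}=\{j:d_j>0\}$, so the unboundedness test on line~\ref{line:stable-unbounded} agrees and the ratio tests range over the same index set.

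\textbf{(III) Leaving variable.} This step is the main obstacle, because Bland's rule breaks ties by the minimum index, so exact ties must be preserved under perturbation. Let $u^\ast$ be the exact minimum ratio over the common index set. Any index $j$ with $u_j\ne u^\ast$ has $u_j\ge u^\ast+\umin$, by the definition of $\umin$. On the common set, \autoref{lm:errors-update} gives $|\hat u_j-u_j|\le\|\e\bs u\|_1\le \umin/2-\tstar/2<\umin/2$. Hence every index attaining the exact minimum computes a value strictly below every non-minimizer's computed value, and the floating-point argmin is contained in the exact argmin set.

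The delicate point is the converse: floating-point rounding may split an exact tie, so the computed argmin need not recover the whole tied set. If that happens, $\min(\argmin)$ could differ from the exact Bland choice. To handle this I would, as a first attempt, read line~\ref{line:stable-leaving} as selecting ties within $\tstar$, which is consistent with the margin $\umin-\tstar$ appearing in condition~\ref{cond:bound-errors}. Under that reading, two exactly tied indices have computed values within $\|\e\bs u\|_1\le\tstar/2$ of each other, so they remain tied. Non-minimizers differ from the minimizers by more than $\umin-\tstar/2-\tstar/2\cdot$(error terms) $>\tstar$, so they stay outside the tolerance window. Then the tolerant argmin equals the exact argmin, and the minimum index agrees. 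With this, all decisions at iteration $k$ coincide, which closes the induction.
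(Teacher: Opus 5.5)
Your proposal is correct and follows essentially the same route as the paper: the same constants $\rmax$, $\dmin$, $\umin$, the same choice $0<\tstar<\min(-\rmax/2,\dmin/2,\umin)$ followed by a small enough $\estar$, and the same tolerance-window reading of the leaving-variable line in step (III), which is exactly what the paper's appendix proves (a computed ratio lies within $\tstar$ of the computed minimum if and only if the exact ratio is minimal). Your garbled estimate for non-minimizers should read $\hat u_j-\hat u_k\ge \umin-\|\e\bs{u}\|_1\ge(\umin+\tstar)/2>\tstar$, with $k$ the computed minimizer, and your explicit induction on the basis and uniformity over iteration indices make precise what the paper leaves implicit.
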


\begin{proofsketch}
  There are finitely many admissible bases.
  Consequently, there are finitely many exact quantities that govern the algorithm's decisions.
  In particular, let $-\rmax$, $\dmin$ and $\umin$ be positive values defined as in~\autoref{sec:error-bounds-simplex};
  if any is missing, it is replaced by $\infty$.
  Choose a tolerance $\tau^\star$ such that
  \begin{equation*}
    0 < \tau^\star <
    \min\left\{
    -\rmax/2,
    \dmin/2,
    \umin
    \right\}.
  \end{equation*}
  Lemmas \ref{lm:errors-linear-systems}--\ref{lm:errors-update}
  ensure that there exists $\epsilon^\star > 0$ small enough such that
  \begin{equation*}
    \|\e\bs{r}\|_1\le\tau^\star,\qquad
    \|\e\bs{d}\|_1\le\tau^\star,\qquad
    \|\e\bs{u}\|_1<
    \min\{\tau^\star,\umin-\tau^\star\}/2,
  \end{equation*}
  thus guaranteeing that decisions (I), (II), and (III) made by~\autoref{alg:stable-simplex} coincide with those made by a rational algorithm.
\end{proofsketch}

\begin{remark}
  Although we have stated the theorem for a specific tolerance $\tstar$ and precision $\estar$,
  any $\epsilon \le \estar$ satisfies the conditions of~\autoref{thm:correct-algorithm}, as does any $\tau \le \tstar$, but not necessarily with the same value of $\estar$.
\end{remark}

\section{Certifying Delta-Complete Linear Programming}
\label{sec:delta-complete-simplex}

\autoref{thm:correct-algorithm} above establishes that
\autoref{alg:stable-simplex} reproduces the decisions of
the exact simplex method when initialized with a feasible basis and
sufficiently accurate numerical parameters $(\epsilon,\tau)$. These
assumptions leave two issues unresolved: a feasible basis must first be constructed, and suitable values of $\epsilon$ and $\tau$ are not known a priori. We address both issues by combining the tolerance-aware simplex procedure with a Phase-I problem~\cite{ref:simplex-phase-i} and a diagonal enumeration of increasing precisions and decreasing tolerances.
\autoref{alg:delta-complete-simplex} is the main algorithmic contribution
of the paper: it combines these ingredients into an end-to-end
certifying procedure for arbitrary rational LP instances.

For each parameter pair, \autoref{alg:infeasibility-check-simplex} applies the floating-point simplex procedure to the Phase-I problem and validates the result in exact rational arithmetic.
A certified positive optimum proves that the original LP is infeasible, whereas a certified zero optimum provides a feasible basis after the artificial variables have been removed.
\autoref{alg:delta-complete-simplex} then applies the same procedure to the original LP.
Exactly validated primal- and dual-feasible solutions provide upper and lower bounds on the optimal value, respectively; their associated witnesses form a $\delta$-optimality certificate once the gap is at most $\delta$.
Exact validation similarly establishes unboundedness when an improving feasible ray is found.
\autoref{thm:delta-complete-termination} and \ref{thm:correct-delta-complete} establish termination and correctness.

\setlength{\textfloatsep}{5pt}
\begin{algorithm}[h]
  \caption{\gls{lp} infeasibility check}
  \label{alg:infeasibility-check-simplex}
  \SetAlgoVlined
  \KwInput{$\bs{A} \in \mathbb{Q}^{m \times n}, \bs{b} \in \mathbb{Q}^m, \bs{c} \in \mathbb{Q}^{n}$, feasible basis indices $\base \in \N^{m}$ \newline
    Floating-point precision with unit roundoff $\epsilon \in \Q$, $\epsilon > 0$ \newline
    Tolerance $\tau \in \Q$, $\tau > \epsilon$}
  \KwOutput{Result symbol $\in \{\underline{\text{feasible}}, \underline{\text{infeasible}}, \underline{\text{error}}\}$ \newline
    Last basis index assignment $\base$}
  (result, $\base$, $e$) $\gets$ \autoref{alg:stable-simplex}($\A, \bs{b}, \bs{c}$, $\base$, $\epsilon$, $\tau$)\;
  \label{line:infeasibility-check-simplex:call}
  \new{
    \Try{}{
      Factorize the basis $\B \coloneqq \A_{\base}$ in rational arithmetic, yielding $\B^{-1}$\;
    }
    \Catch{Basis is non-invertible}{
      \Return \underline{error}, $\base$ \Comment*{Numerical difficulties}
    }
    $\zb \gets \B^{-1} \bs{b}$ \;
    $\omega \gets \cb^T \zb$ \;
    $\y \gets \B^{-T} \bs{c}_{\base}$ \;
    $\bs{r} \gets \bs{c} - \A^T \y$ \;
    \If{$\zb \ge 0$ and $\bs{r} \ge 0$ and $\omega > 0$} {
      \label{line:infeasibility-check-simplex:infeasible}
      \Return \underline{infeasible}, $\base$ \;
    }
    \ElseIf{$\zb \ge 0$ and $\bs{r} \ge 0$ and $\omega = 0$} {
      \label{line:infeasibility-check-simplex:feasible}
      \Return \underline{feasible}, $\base$ \;
    }
    \Return \underline{error}, $\base$ \Comment*{\autoref{alg:stable-simplex} did not yield a valid solution}
  }
\end{algorithm}

\begin{algorithm}[t!]
  \caption{$\delta$-complete LP simplex algorithm}
  \label{alg:delta-complete-simplex}
  \SetAlgoVlined
  \KwInput{$\bs{A} \in \mathbb{Q}^{m \times n}, \bs{b} \in \mathbb{Q}^m, \bs{c} \in \mathbb{Q}^{n}, \delta \in \mathbb{Q}_{\ge 0}$}
  \KwOutput{Result symbol $\in \{\underline{\delta\text{-optimal}}, \underline{\text{unbounded}}, \underline{\text{infeasible}}\}$ \newline
    \new{$\delta$-certificate $(\omega_L, \omega_U)$ if applicable, $\bot$ otherwise}}
  \KwData{Infinite sequence of precisions with unit roundoffs $E = \{\epsilon_0, \epsilon_1, \dots\} \subseteq \Q$
    and tolerances $T = \{\tau_0 , \tau_1, \dots\} \subseteq \Q$ converging to zero}

  $\A_F \gets \begin{bmatrix} \A & \I_m & -\I_m \end{bmatrix}$ \Comment*{Feasibility coefficients}
  \label{line:delta-complete-simplex:feasibility-lp-start}
  $\bs{c}_{F_i} \gets \begin{bmatrix} \bs{0}_n^T, \bs{1}_{2m}^T \end{bmatrix}^T$ \Comment*{Feasibility objective}
  $\base[F]_i \gets  \begin{cases} n + i & \bs{b}_i \ge 0 \\ n + m + i & \text{otherwise}\end{cases} \forall i \in \{1, \dots, m\}$
  \Comment*{Feasible basis indices}
  \label{line:delta-complete-simplex:feasibility-lp-end}

  $(\omega_L, \omega_U) \gets (-\infty, \infty)$ \;
  \For{$k \gets 0 \text{ to } \infty$}{
    \For{all $(\epsilon_i , \tau_j ) \in \Q \times \Q$ such that $i + j = k$ and $\epsilon_i < \tau_j$} {

      (result, $\base$) $\gets$ \autoref{alg:infeasibility-check-simplex}($\A_F, \bs{b}, \bs{c}_F, \base[F], \epsilon_i, \tau_j$) \;
      \label{line:delta-complete-simplex:infeasibility-check}
      \uIf{result = \underline{infeasible}}{
        \Return \underline{infeasible}, $\bot$ \;
        \label{line:delta-complete-simplex:infeasible}
      }
      \ElseIf{result = \underline{error}}{
        Go to the next precision \;
      }
      \If{$\base$ contains indices greater than $n$} {
        Pivot out the auxiliary variables if possible; otherwise the corresponding constraint is redundant and may be deleted~\cite{ref:simplex-phase-i}.
      }

      (result, $\base$, $e$) $\gets$ \autoref{alg:stable-simplex}($\bs{A}, \bs{b}, \bs{c}, \base, \epsilon_i, \tau_j$) \;
      \Try{}{
        factorize the basis $\B \coloneqq \A_{\base}$ in rational arithmetic, yielding $\B^{-1}$\;
      }
      \Catch{Basis is non-invertible} {
        Go to the next precision \;
      }
      $\xb \gets \B^{-1} \bs{b}$ \;
      \If{$\xb \ge 0$}{
        $\omega_{U} \gets \min(\bs{c}^T_{\base}\xb, \omega_U)$ \Comment*{Primal feasible; Upper bound}
      }
      $\y \gets \B^{-T}\cb$ \;
      $\bs{r} \gets \bs{c} - \A^T \y$ \;
      \uIf{$\bs{r} \ge 0$}{
        $\omega_{L} \gets \max(\bs{b}^T \y, \omega_L)$ \Comment*{Dual feasible; Lower bound}
      } \ElseIf{$r_e < 0$ and $\xb \ge 0$ and result = \underline{unbounded}}{
        $\bs{d} \gets \B^{-1}\A_{:,e}$ \;
        \If{$\bs{d} \le 0$}{
          \Return \underline{unbounded}, $\bot$ \Comment*{The original LP is unbounded}
          \label{line:delta-complete-simplex:unbounded}
        }
      }
      \If{$\omega_{U},\omega_{L}$ are finite values and $\omega_{U} - \omega_{L} \le \delta$}{
        \Return \underline{$\delta$-optimal}, $(\omega_{U}, \omega_{L})$ \;
        \label{line:delta-complete-simplex:delta}
      }
    }
  }
\end{algorithm}

\begin{theorem}[Termination]
  \label{thm:delta-complete-termination}
  \autoref{alg:delta-complete-simplex} always terminates in finite time.
\end{theorem}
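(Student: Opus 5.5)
The plan is to show two things. First, each pass of the inner loop runs in finite time. Second, there is a finite outer index $k$ at which some parameter pair $(\epsilon_i,\tau_j)$ forces a \texttt{return}.

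For the first part, each inner iteration calls \autoref{alg:infeasibility-check-simplex} once and \autoref{alg:stable-simplex} once. By \autoref{lm:termination}, each call of \autoref{alg:stable-simplex} stops after at most $\maxiterations$ iterations. The remaining work is finite:
\begin{itemize}
\item exact rational factorizations and solves, plus sign tests;
\item pivoting out the auxiliary variables, which needs at most $m$ pivots or row deletions.
\end{itemize}
For each $k$ there are only finitely many pairs with $i+j=k$, so every outer iteration is finite. It therefore suffices to exhibit a $k$ at which the algorithm returns.

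For the second part I apply \autoref{thm:correct-algorithm} twice. Once is to the Phase-I problem $(\A_F,\bs{b},\bs{c}_F)$ with its feasible starting basis $\base[F]$. The other is to the original problem, started from each of the finitely many feasible bases that Phase I could hand over. Each application gives thresholds $(\tstar,\estar)$. Let $\tstar_{\min}$ be the minimum of these finitely many tolerance thresholds. Then, by the Remark following \autoref{thm:correct-algorithm}, let $\estar_{\min}$ be the minimum of the precisions that work for $\tau\le\tstar_{\min}$.

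Because $E$ and $T$ converge to zero, there exist $j$ with $\tau_j\le\tstar_{\min}$ and $i$ with $\epsilon_i\le\estar_{\min}$ and $\epsilon_i<\tau_j$. The diagonal enumeration reaches this pair at $k=i+j$. With these parameters, \autoref{alg:stable-simplex} makes exactly the decisions of exact Bland simplex. Its final basis is therefore a genuine basis of the exact run, so the rational factorization succeeds and no \underline{error} branch is taken.

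I then split into cases according to what the exact simplex does.

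\emph{Infeasible LP.} The Phase-I optimum is positive. The exact optimal basis gives $\zb\ge0$, $\bs{r}\ge0$ and $\omega>0$, so line~\ref{line:delta-complete-simplex:infeasible} returns.

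\emph{Feasible LP.} Phase I ends with $\omega=0$, and the auxiliary variables are removed. Phase II then runs on the original LP.
\begin{itemize}
\item If the LP is unbounded, the exact run stops with $r_e<0$, $\xb\ge0$ and $\bs{d}\le0$, so line~\ref{line:delta-complete-simplex:unbounded} returns.
\item Otherwise the exact run ends at an optimal basis with $\xb\ge0$ and $\bs{r}\ge0$. This sets $\omega_U=\bs{c}_{\base}^T\xb$ and $\omega_L=\bs{b}^T\y$, which are equal by \autoref{thm:strong-duality} (complementary slackness of the basic pair). The gap is $0\le\delta$, so line~\ref{line:delta-complete-simplex:delta} returns.
\end{itemize}
Earlier iterations may have returned sooner; that only helps termination.

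The main obstacle is making the choice of $(\tstar,\estar)$ uniform across phases. The basis passed into Phase II depends on the Phase-I run and on the pivot-out step. I will handle this by quantifying over all finitely many feasible bases of the original LP, since \autoref{thm:correct-algorithm} only needs the start basis to be feasible.

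A second subtlety is the redundant-row deletion, which changes $m$. Again only finitely many row subsets arise, so I take the minimum over those reduced problems as well.

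I also need to check that the pivot-out step is performed exactly (or at least yields a feasible basis). If it might be inexact, I would argue it is done in rational arithmetic, as the surrounding exact validation suggests.
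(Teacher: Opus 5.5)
Your argument follows the same route as the paper's proof. \autoref{alg:stable-simplex} always halts, a sufficiently fine pair $(\epsilon_i,\tau_j)$ makes it reproduce the exact run, and the diagonal enumeration reaches such a pair after finitely many outer iterations. You spell out several things the paper leaves implicit: finiteness of each inner pass, thresholds that must serve both the Phase-I and Phase-II problems, the side condition $\epsilon_i<\tau_j$ needed for the pair to be enumerated at all, and the check that each exact outcome actually fires one of the return lines. Minimising over starting bases is harmless but unnecessary. The thresholds in \autoref{thm:correct-algorithm} are already uniform over all bases of a fixed LP, because $\rmax$, $\dmin$, $\umin$ and the final choice of $\epsilon$ range over all bases. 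The loose end you flag about the pivot-out step is not treated in the paper either. Under the standing full-row-rank assumption on $\A$, every basic auxiliary variable (necessarily at level $0$ when $\omega=0$) can be removed by an exact degenerate pivot, so feasibility is preserved and no row deletion occurs.

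One step has its quantifiers in the wrong order. You define $\epsilon^\star_{\min}$ as the precision that works ``for $\tau\le\tau^\star_{\min}$'' and only afterwards pick $\tau_j\le\tau^\star_{\min}$. The admissible precision depends on $\tau$; this is exactly the caveat in the Remark you invoke. Condition~\ref{cond:bound-errors} requires $\|\e\bs{r}\|_1\le\tau$ and $\|\e\bs{d}\|_1\le\tau$, and these tighten as $\tau\to 0$. So no positive $\epsilon$ works simultaneously for all $\tau\le\tau^\star_{\min}$. Likewise, a precision that works for $\tau^\star_{\min}$ itself need not work for the smaller $\tau_j$ you then select, since $T$ need not contain $\tau^\star_{\min}$.

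Reorder as the paper does:
\begin{enumerate}
\item Fix $j$ with $\tau_j\le\tau^\star_{\min}$.
\item Let $\epsilon^\dagger>0$ be the minimum, over your finitely many problems, of the precisions valid for this particular $\tau_j$.
\item Choose $i$ with $\epsilon_i\le\epsilon^\dagger$ and $\epsilon_i<\tau_j$; such an $i$ exists because $E\to 0$ and $\tau_j>0$.
\end{enumerate}
With that repair, the rest of your argument goes through.
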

\begin{proof}
  From~\new{\autoref{lm:termination}} and \autoref{thm:correct-algorithm}, we know that~\autoref{alg:stable-simplex} \new{always terminates and} produces the exact result given sufficiently small tolerance \new{$\tau^\star$} and \new{unit roundoff $\epsilon^\star$}.
  \new{Since the sequences $T$ (of tolerances) and $E$ (of unit roundoffs) in \autoref{alg:delta-complete-simplex} converge to zero, they contain both $\tau_{j^\star} \le \tau^\star$ and $\epsilon_{i^\star} \le \epsilon^\dagger$, where $\epsilon^\dagger \le \epsilon^\star$ is the unit roundoff corresponding to $\tau_{j^\star}$ as explained in~\autoref{sec:error-bounds-simplex}.
    Diagonal enumeration processes $(\epsilon_{i^\star}, \tau_{j^\star})$ after finitely many iterations.}
  \qed
\end{proof}

\begin{theorem}[Correctness]
  \label{thm:correct-delta-complete}
  \autoref{alg:delta-complete-simplex} always returns correct results according to the definition of $\delta$-completeness in \autoref{def:delta-complete}.
\end{theorem}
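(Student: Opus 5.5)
The plan is to show soundness separately for each of the three return statements of \autoref{alg:delta-complete-simplex}, and then invoke \autoref{thm:delta-complete-termination} to conclude that exactly one symbol is output. The key observation is that every return is guarded by a test computed entirely in exact rational arithmetic on a basis $\base$ that has been factorized exactly. The floating-point \autoref{alg:stable-simplex} is therefore used only as an oracle proposing candidate bases. Its numerical errors can cause an \underline{error} outcome or a skip to the next precision, but never a wrong answer. Because the ``only if'' directions of \autoref{def:delta-complete} follow from the mutual exclusivity of infeasible, unbounded, and having an optimum, it suffices to prove that each returned symbol is certified.

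For \textbf{infeasible} (line~\ref{line:delta-complete-simplex:infeasible}), \autoref{alg:infeasibility-check-simplex} returns this only when, for the Phase-I LP $(\A_F,\bs{b},\bs{c}_F)$, the exact quantities satisfy $\zb\ge 0$, $\bs{r}\ge 0$ and $\omega>0$. Hence $\y=\B^{-T}\cb$ is dual feasible and $\zb$ is primal feasible with equal objective $\omega$. By \autoref{thm:weak-duality}, the Phase-I optimum is $\omega>0$. If the original LP had a feasible $\x\ge0$ with $\A\x=\bs{b}$, then $(\x,\bs{0},\bs{0})$ would be Phase-I feasible with cost $0$, a contradiction.

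For \textbf{unbounded} (line~\ref{line:delta-complete-simplex:unbounded}), the exact checks give $\xb\ge 0$, so the basic solution $\x$ is feasible. They also give $r_e<0$ and $\bs{d}=\B^{-1}\A_{:,e}\le 0$. The ray $\x(\theta)$, obtained by setting $x_e=\theta$ and the basic part to $\xb-\theta\bs{d}$, stays feasible for all $\theta\ge 0$, and its objective equals $\bs{c}^T\x+\theta r_e\to-\infty$. This is the standard simplex unboundedness argument, carried out in rationals.

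For \textbf{$\delta$-optimal} (line~\ref{line:delta-complete-simplex:delta}), $\omega_U$ is only ever updated with $\cb^T\xb$ for an exactly verified primal-feasible basic solution, and $\omega_L$ only with $\bs{b}^T\y$ for an exactly verified dual-feasible $\y$ (with $\bs{r}\ge 0$). Both values being finite means both witnesses exist; we keep the witnesses attaining the current min and max. \autoref{thm:weak-duality} gives $\omega_U-\omega_L\ge 0$, and the return test gives $\le\delta$, so the stored pair is a $\delta$-optimality certificate as in \autoref{def:delta-optimality-certificate}. One subtlety must be checked: the feasible basis handed to the second call comes from the Phase-I result after auxiliary variables are pivoted out or redundant rows deleted. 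Row deletion preserves the feasible set, so the bounds remain valid for the original LP. The dual vector must be padded with zeros on deleted rows, which preserves $\A^T\y\le\bs{c}$ only because those rows are linear combinations of the others. I will argue this by re-deriving $\y$ on the reduced system and noting that the objective values agree.

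The main obstacle is the converse direction, namely that the algorithm actually returns in every case, for instance that a feasible bounded LP does not return \underline{unbounded} and always eventually produces a certificate. This I would take from \autoref{thm:delta-complete-termination} together with \autoref{thm:correct-algorithm}. At the good pair $(\epsilon_{i^\star},\tau_{j^\star})$, the Phase-I run reproduces the exact decisions, so the exact checks pass. The Phase-II run then ends in an exactly optimal basis, giving $\omega_U=\omega_L$, or in an exact unboundedness witness. Since each return is independently certified, and the three outcomes are mutually exclusive for an LP, the returned symbol satisfies the ``if and only if'' conditions of \autoref{def:delta-complete}.
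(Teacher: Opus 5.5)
Your proposal is correct and follows essentially the same route as the paper: a case analysis over the three return points, each justified purely by the exact rational checks (a certified positive Phase-I optimum, an exactly verified improving ray, and an exactly primal- and dual-feasible pair combined with weak duality), with your termination-plus-mutual-exclusivity argument for the converse directions making explicit what the paper leaves implicit. One small correction to your side remark: zero-padding the dual on deleted rows leaves $\A^T\y$ and $\bs{b}^T\y$ unchanged regardless of redundancy (linear dependence of the deleted rows is needed only on the primal side), and under the paper's full-row-rank assumption no rows are actually deleted.
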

\begin{proofsketch}
  Each return is validated by exact rational arithmetic.
  An \underline{infeasible} result is issued only when the auxiliary feasibility LP has a strictly positive certified optimum, which means that the original constraints cannot be satisfied. An \underline{unbounded} result requires an exactly primal-feasible basis, a negative reduced cost, and an exactly nonpositive direction $\B^{-1}\A_{:,e}$; the entering variable can therefore increase without limit while preserving feasibility and improving the objective. Finally, a \underline{$\delta$-optimal} result is returned only after finding an exactly primal-feasible solution of value $\omega_U$ and an exactly dual-feasible solution of value $\omega_L$ with $\omega_U-\omega_L\le\delta$,
  thereby yielding a valid $\delta$-optimality certificate.
\end{proofsketch}

\section{Experimental Evaluation}
\label{sec:benchmarks}

In this section, we show that our approach correctly solves a challenging class of \gls{lp} problems on which standard floating-point solvers often fail, and does so faster than other approaches.

\noindent{\textbf{Implementation and experiment setup.}}
We extended the precision-boosting \qsoptex solver by adding a $\delta$-complete mode, and integrated its routines into a modern framework that we call \delpi.\footnote{\url{https://github.com/TendTo/delpi}}
\delpi can also be built with the state-of-the-art \soplex solver, allowing users to choose between the two underlying solvers.
For ease of use, it can also be built as a Python package, \pydelpi.
We compare \delpi's running times with those of the standalone \soplex solver. %

All experiments were conducted on a RHEL 11.4.1 HPC platform with an AMD EPYC 9745 @ 2.4 GHz processor.
Each run is restricted to 4 GB of memory, 1 core, and a timeout of 6 hours.
The \soplex solver (version \texttt{8.0.3}) is used with two configurations:
the default, hybrid configuration (iterative refinement and precision boosting), %
and the pure precision boosting configuration. %
\footnote{We use the \texttt{exact} and \texttt{exact-pure-boosting} settings from \url{https://github.com/scipopt/soplex/tree/master/settings}}

\noindent{\textbf{Benchmark instances.}}
We evaluate the performance of our algorithm on the \gls{sk}~\cite{ref:sloane}  \gls{lp} benchmarks, which have been used for benchmarking in~\cite{ref:precision-boosting}.
The \gls{sk} \gls{lp}s are dense and feasible, and are characterized by five parameters ($s_1, s_2, k_1, k_2, t$), which rapidly increase the problem size and numerical complexity.
Their challenging numerical structure makes them difficult for state-of-the-art floating-point solvers.
In fact, \gurobi~\cite{ref:gurobi} (version \texttt{12.0.3}) incorrectly declares all instances as infeasible and \highs~\cite{res:highs} (version 1.15.1) cannot handle the large coefficients in the input.
We report results for the 67 instances generated by varying $k_1,k_2\in[18,28]$, $s_1,s_2\in[35,50]$, and $t\in[30,40]$ and on which at least one solver reports a solution.

\noindent{\textbf{Results.}}
As summarized in~\autoref{tab:solver-summary}, \delpi is particularly effective on this family of problems and solves most instances faster than other solvers.
Forcing \soplex to use only precision boosting improves its performance, but it does not change the outcome.
Note that the best heuristic is often problem-dependent: iterative refinement can be the better choice in other cases~\cite{ref:iterative-refinement}.
Additional details and runtimes are provided in~\aref{app:benchmarks}.

{
\setlength{\tabcolsep}{6pt}

\begin{table}[h]
\centering
    \begin{tabular}{lccc}
\toprule
Solver & \delpi & \soplex\textsubscript{Hy} & \soplex\textsubscript{PB} \\
\midrule
Solved & 60 & 45 & 36 \\
Fastest & 59 & 3 & 5 \\
\bottomrule
\end{tabular}

    \caption{Summary of the results.
        The rows indicate the number of instances solved optimally 
        and the number of instances for which the solver was the fastest among those that found an optimal solution.
        \delpi uses our modified version of \qsoptex.
        The pedices indicate the solver configuration: \textit{Hy} for the hybrid (precision boosting and iterative refinement), and \textit{PB} for pure precision boosting.
    }
\label{tab:solver-summary}
\end{table}

\vspace{-1.4cm}
}

\section{Conclusion}
\label{sec:conclusion}

In this paper, we established numerical foundations for reliable
\gls{lp} reasoning in formal verification workflows. Focusing on the
\bg update, we derived a priori bounds on its backward-error propagation
and used them to prove that, for a suitable problem-dependent precision
and tolerance pair, the floating-point procedure reproduces the decisions
of its exact rational counterpart. Building on these results, we
developed a $\delta$-complete simplex procedure for rational \gls{lp}
problems in standard form. The procedure returns either a
$\delta$-optimality certificate with an optimality gap of at most a
user-specified threshold $\delta$, or a certified result of
infeasibility or unboundedness. These guarantees are particularly
relevant to \gls{smt} solvers for linear arithmetic, where an incorrect
numerical decision can compromise the soundness of the overall
verification result.
Our experimental evaluation on the numerically challenging
Sloane--Stufken benchmark family demonstrated the practical potential of
the approach.
Future work will include the exploitation of $\delta$-complete algorithm more extensively, especially in the context of \gls{smt} solving.

\bibliographystyle{spmpsci}
\bibliography{resources}

\ifincludeappendix
  \newpage
  \appendix

\section{Experimental Results}
\label{app:benchmarks}

Additional details on the experimental results are provided in this section.
We present the trends observed in the performance of \delpi and \soplex in~\autoref{fig:results-sk}, using performance profiles~\cite{cite:performance-profiles}.
Performance profile plots show, on the vertical axis, the percentage of problems for which the solver finishes within a given tolerance from the fastest solver, shown on the horizontal axis; this provides a meaningful high-level comparison across large benchmark sets.

\begin{figure}[h!]
    \centering
    \input{lp_pp_sk.pgf}
    \caption{Performance profiles obtained for the \gls{sk} benchmarks, with a time limit of 6 hours.}
    \label{fig:results-sk}
\end{figure}

\newpage
{
    \setlength{\tabcolsep}{6pt}
    \begin{longtable}{cccccccc}
\caption{The timings obtained by all solvers on all 67 \gls{sk} results, sorted by the parameters $s_1$, $s_2$, $k_1$, $k_2$ and $t$.
\delpi uses our modified version of \qsoptex.
The pedices indicate the solver configuration: \textit{Hy} for the hybrid (precision boosting and iterative refinement), and \textit{PB} for pure precision boosting.} \label{tab:solver-all} \\
\toprule
s1 & s2 & k1 & k2 & t & \textit{Delpi} & \textit{SoPlex}\textsubscript{Hy} & \textit{SoPlex}\textsubscript{PB} \\
\midrule
\endfirsthead
\caption[]{The timings obtained by all solvers on all 67 \gls{sk} results, sorted by the parameters $s_1$, $s_2$, $k_1$, $k_2$ and $t$.
\delpi uses our modified version of \qsoptex.
The pedices indicate the solver configuration: \textit{Hy} for the hybrid (precision boosting and iterative refinement), and \textit{PB} for pure precision boosting.} \\
\toprule
s1 & s2 & k1 & k2 & t & \textit{Delpi} & \textit{SoPlex}\textsubscript{Hy} & \textit{SoPlex}\textsubscript{PB} \\
\midrule
\endhead
\midrule
\multicolumn{8}{r}{Continued on next page} \\
\midrule
\endfoot
\bottomrule
\endlastfoot
5 & 7 & 18 & 18 & 35 & 48.05 & 270.77 & 419.83 \\
5 & 7 & 19 & 19 & 38 & 96.59 & 528.27 & 754.25 \\
8 & 9 & 18 & 18 & 35 & 58.67 & 1987.00 & 7359.37 \\
11 & 12 & 17 & 19 & 35 & 63.98 & 1107.54 & 2233.96 \\
35 & 35 & 18 & 19 & 31 & 691.07 & 4332.39 & / \\
35 & 35 & 18 & 19 & 37 & 137.81 & 282.78 & 519.89 \\
35 & 35 & 18 & 19 & 38 & 124.19 & 274.89 & 567.66 \\
35 & 35 & 18 & 19 & 39 & 121.57 & 331.35 & 587.15 \\
35 & 35 & 18 & 21 & 38 & 322.75 & 6698.54 & 7143.19 \\
35 & 35 & 18 & 23 & 32 & 1187.60 & 7797.67 & 5413.61 \\
35 & 35 & 18 & 24 & 32 & 2556.31 & 12476.91 & / \\
35 & 35 & 18 & 24 & 36 & / & 7814.81 & / \\
35 & 35 & 18 & 26 & 32 & / & / & / \\
35 & 35 & 18 & 26 & 33 & 3561.50 & / & / \\
35 & 35 & 19 & 21 & 39 & 1848.09 & 7404.79 & 7065.33 \\
35 & 35 & 19 & 26 & 30 & / & / & / \\
35 & 35 & 19 & 27 & 38 & 3512.41 & / & / \\
35 & 35 & 20 & 23 & 33 & 2672.67 & 13802.59 & 8445.07 \\
35 & 35 & 20 & 24 & 31 & 3661.46 & 19695.86 & / \\
35 & 35 & 21 & 20 & 32 & 2328.16 & 10954.17 & / \\
35 & 35 & 21 & 21 & 32 & 2540.53 & 14161.28 & / \\
35 & 35 & 22 & 19 & 34 & 2024.59 & 9326.33 & / \\
35 & 35 & 22 & 21 & 30 & / & 18183.22 & 15529.98 \\
35 & 35 & 22 & 22 & 31 & 2121.22 & 20157.66 & / \\
35 & 35 & 22 & 23 & 31 & 2884.85 & / & / \\
35 & 35 & 22 & 24 & 31 & 4468.63 & / & / \\
35 & 35 & 22 & 25 & 30 & 5759.33 & / & / \\
35 & 35 & 22 & 27 & 31 & / & / & / \\
35 & 35 & 23 & 18 & 30 & 1706.65 & 14628.13 & / \\
35 & 35 & 23 & 22 & 35 & 4368.47 & / & / \\
35 & 35 & 24 & 19 & 32 & 3374.15 & / & / \\
35 & 35 & 24 & 20 & 32 & / & / & / \\
35 & 35 & 24 & 23 & 30 & 4670.52 & / & / \\
35 & 35 & 24 & 24 & 38 & 7387.83 & / & / \\
35 & 35 & 25 & 19 & 32 & 2871.00 & 15949.23 & / \\
35 & 35 & 26 & 18 & 31 & 3217.58 & / & / \\
35 & 35 & 26 & 20 & 30 & 3410.12 & / & / \\
35 & 35 & 26 & 21 & 30 & 4331.09 & / & / \\
35 & 35 & 26 & 21 & 33 & 3225.68 & / & / \\
35 & 35 & 27 & 18 & 32 & 1964.17 & / & / \\
35 & 35 & 27 & 19 & 31 & 3912.39 & / & / \\
35 & 35 & 27 & 19 & 38 & 2343.75 & / & / \\
35 & 35 & 27 & 21 & 31 & / & / & / \\
35 & 36 & 19 & 18 & 34 & 1046.58 & 2759.97 & 2590.43 \\
35 & 36 & 19 & 19 & 34 & / & 4568.73 & 6132.41 \\
35 & 36 & 20 & 19 & 30 & 1755.71 & 14028.20 & 8672.97 \\
35 & 36 & 21 & 18 & 37 & 265.94 & 8166.68 & 9727.15 \\
35 & 36 & 21 & 19 & 32 & 1643.23 & 7617.04 & 11385.33 \\
35 & 36 & 21 & 20 & 32 & 2863.83 & 14318.61 & 10349.70 \\
35 & 36 & 21 & 21 & 39 & 2909.76 & 4732.90 & 2714.43 \\
35 & 36 & 24 & 18 & 35 & 1956.42 & 14304.11 & 10351.69 \\
35 & 36 & 24 & 20 & 37 & 3918.89 & / & / \\
35 & 36 & 25 & 18 & 31 & / & / & 9650.59 \\
35 & 36 & 25 & 19 & 39 & / & / & / \\
35 & 36 & 26 & 18 & 36 & / & / & / \\
35 & 36 & 26 & 18 & 37 & 3566.70 & / & / \\
35 & 36 & 27 & 18 & 38 & 2732.81 & / & / \\
35 & 37 & 18 & 18 & 35 & 415.24 & 3878.80 & 4444.57 \\
35 & 37 & 18 & 18 & 36 & 167.57 & 280.14 & 311.65 \\
35 & 37 & 18 & 18 & 37 & 180.20 & 379.07 & 200.00 \\
35 & 37 & 18 & 18 & 38 & 60.73 & 327.80 & 341.39 \\
35 & 37 & 18 & 18 & 39 & 165.04 & 280.49 & 335.58 \\
35 & 37 & 20 & 18 & 38 & 96.96 & 387.19 & 940.02 \\
35 & 37 & 20 & 18 & 39 & 96.01 & 519.73 & 973.79 \\
35 & 37 & 22 & 18 & 34 & / & 7935.37 & 4622.29 \\
35 & 37 & 23 & 19 & 38 & 2061.01 & 8665.51 & 10310.63 \\
35 & 37 & 27 & 18 & 34 & 6591.27 & / & / \\
35 & 37 & 27 & 18 & 36 & 3300.27 & / & / \\
35 & 38 & 19 & 18 & 36 & 943.15 & 6593.41 & 5897.96 \\
35 & 38 & 19 & 18 & 39 & / & 331.66 & 494.40 \\
40 & 80 & 17 & 17 & 33 & 116.68 & 1692.36 & 2092.15 \\
40 & 80 & 18 & 18 & 33 & 565.00 & 1928.69 & 2403.83 \\
50 & 70 & 18 & 18 & 33 & 464.55 & 1445.50 & 2518.53 \\
50 & 80 & 18 & 18 & 32 & / & 1960.76 & 1620.57 \\
\end{longtable}

}

\section{Proofs}
\label{app:proofs}
We have collected a series of proofs of some of the claims we made in the main text.

\subsection{Product-Difference Identity}
\label{app:matrix-expression}
Given two series of matrices $\bs{M}_1, \bs{M}_2, \dots, \bs{M}_p$ and $\bs{N}_1, \bs{N}_2, \dots, \bs{N}_p$,
we want to derive the identity
\begin{equation*}
    \begin{aligned}
         & \bs{M}_1\bs{M}_2\dots \bs{M}_p - \bs{N}_1\bs{N}_2\dots \bs{N}_p =                                  \\
         & (\bs{M_1} - \bs{N}_1)\bs{N}_2\dots \bs{N}_p + \bs{M}_1(\bs{M}_2 - \bs{N}_2)\bs{N}_3\dots\bs{N}_p + \\
         & +\dots + \bs{M}_1\bs{M}_2\dots\bs{M}_{p-1}(\bs{M}_p - \bs{N}_p) .
    \end{aligned}
\end{equation*}
The case $p = 1$ is trivial, and the case $p = 2$ is also easy to verify by expanding the right-hand side:
\begin{equation*}
    \begin{aligned}
         & (\bs{M}_1 - \bs{N}_1)\bs{N}_2 + \bs{M}_1(\bs{M}_2 - \bs{N}_2) =             \\
         & \bs{M}_1\bs{N}_2 - \bs{N}_1\bs{N}_2 + \bs{M}_1\bs{M}_2 - \bs{M}_1\bs{N}_2 = \\
         & \bs{M}_1\bs{M}_2 - \bs{N}_1\bs{N}_2
    \end{aligned}
\end{equation*}
For $p > 2$, we proceed by induction.
Assuming that the statement is true for an arbitrary $p$, we shall prove it holds for $p + 1$.
\begin{equation*}
    \begin{aligned}
         & \bs{M}_1\bs{M}_2\dots \bs{M}_p\bs{M}_{p+1} - \bs{N}_1\bs{N}_2\dots \bs{N}_p\bs{N}_{p+1} =                                                                                  \\
         & (\underbrace{\bs{M}_1\bs{M}_2\dots \bs{M}_p - \bs{N}_1\bs{N}_2\dots \bs{N}_p}_{\text{HP}})(\bs{N}_{p+1}) + (\bs{M}_1\bs{M}_2\dots \bs{M}_p)(\bs{M}_{p+1} - \bs{N}_{p+1}) = \\
         & (\bs{M_1} - \bs{N}_1)\bs{N}_2\dots \bs{N}_p\bs{N}_{p+1} + \bs{M}_1(\bs{M}_2 - \bs{N}_2)\bs{N}_3\dots\bs{N}_p\bs{N}_{p+1} +                                                 \\
         & +\dots + \bs{M}_1\bs{M}_2\dots\bs{M}_{p-1}(\bs{M}_p - \bs{N}_p)\bs{N}_{p+1} + \bs{M}_1\bs{M}_2\dots \bs{M}_p(\bs{M}_{p+1} - \bs{N}_{p+1}) .
    \end{aligned}
\end{equation*}

\subsection{Floating-Point Growth Bounds for Gaussian Elimination}
\label{app:element-wise-bounds}
We are concerned with bounding the elements of the factors of an approximate $\bs{L}\U$ decomposition of a matrix $\A \in \R^{n\times n}$ where partial pivoting is used.
This is commonly expressed in terms of the growth factor~\cite{ref:accuracy-and-stability}, which is defined, for $\A = \bs{L}\U$, as
\begin{equation}
    p_n = \frac{\max_{1 \le i,j,k \le n}{|A^k_{i,j}|}}{\|A_{i,j}\|_{\max}} ,
\end{equation}
where $\A^k$ is the matrix $\A$ after $k - 1$ steps of Gaussian elimination with partial pivoting (therefore $\A^1 = \A$).
The expression
\begin{equation}
    |\U| \le p_n\|A_{i,j}\|_{\max}\bs{1}
\end{equation}
provides the desired upper bounds.
We extend this result to account for floating-point arithmetic.

\subsection{Bounds for the Lower Triangular Factor}
\label{app:element-wise-bounds-l}
We start by considering the lower triangular factor $\bs{L}$.
Since we are using Partial Pivoting, the elements of $|\bs{L}|$ are always bounded by $1$, being obtained by the division of two elements of $\A$ as $p / q : |p| \le |q|$.
Considering the perturbation introduced by the floating-point model (\autoref{sec:floating-point}), the division becomes
\begin{equation}
    \fl{\left|\frac{p}{q}\right|} \le 1 \quad \left|\frac{p}{q}\right| \le 1 + \epsilon .
\end{equation}
Hence, the element-wise bound for $\bs{L}$ is
\begin{equation}
    |\bs{L}| \le (1 + \epsilon) \bs{1} .
\end{equation}

\subsection{Bounds for the Inverse Lower Triangular Factor}
\label{app:element-wise-bounds-l-1}
We have established above that the computed $\bs{L}$ is bounded by $(1 + \epsilon)$.
We also know that the elements on the diagonal of $\bs{L}$ are all $1$.
To maximize the growth of the elements of $\bs{L}^{-1}$, we consider the case where all non-diagonal elements are $-1 - \epsilon$ and simply apply Gaussian Elimination.
We get that
\begin{equation}
    |L_{i,j}^{-1}| \le \begin{cases}
        1                                          & \text{if } i = j   \\
        2^{i - j - 1}(1 + \epsilon)^{3(i - j) - 2} & \text{if } i > j   \\
        0                                          & \text{otherwise} .
    \end{cases}
\end{equation}

\subsection{Bounds for the Upper Triangular Factor}
\label{app:element-wise-bounds-u}
In the general case, when Partial Pivoting is used, the growth factor is $p_n \le 2^{n-1}$~\cite{ref:accuracy-and-stability}.
Given four elements $a, b, p, q$ of $\U^k$, to compute a new entry in $\U^{k+1}$ we perform the following operation: $a - \frac{p}{q}b$.
Recalling that $\left|\frac{p}{q}\right| \le 1$ and taking into account floating-point arithmetic, we conclude that
\begin{equation}
    \fl{a - \frac{p}{q}b} \le (a - \frac{p}{q}b(1 + \epsilon)^2)(1 + \epsilon) \le (|a| + |b|)(1 + \epsilon)^3 \le 2(1 + \epsilon)^3\max(|a|, |b|) .
\end{equation}
Applying this new worst-case multiplier $2(1 + \epsilon)^3$ instead of $2$, we get the bound
\begin{equation}
    |\U| \le (2(1 + \epsilon)^3)^{n-1}\|A_{i,j}\|_{\max}\bs{1} .
\end{equation}

\subsection{Bounds for the Hessenberg Factorization}
\label{app:element-wise-bounds-hessenberg}
If $\bs{H}$ is upper Hessenberg, and Partial Pivoting is used, then the growth factor is $p_n \le n$~\cite{ref:accuracy-and-stability}.
This is because at each iteration we only need to  add a multiple of the pivot row to the next row, possibly swapping these two rows.
Therefore, at the $k$-th step, the $k+1$-th row is still the same as the original $k+1$-th row and the pivot element's modulus is at most $k$ times the largest element of the original matrix.
The same cannot be said for the floating point perturbation, which remain $(1 + \epsilon)^3$ at each step.
Therefore, the bound for performing the Gaussian Elimination over the upper Hessenberg matrix factor is
\begin{equation}
    |\bs{H}| \le n(1 + \epsilon)^{3(n-1)}\|A_{i,j}\|_{\max}\bs{1} .
\end{equation}

\subsection{Backward Error in Solving with the Lower Triangular Factor}
Assuming we are using partial pivoting, the elements of $|\bs{L}|$ are always bounded by $1$, being obtained by the division of two elements of $\B^{(i)}$ as $p / q$ with $|p| \le |q|$.
Considering the perturbation introduced by the floating-point model (\autoref{sec:floating-point}), the division becomes
\begin{equation*}
    \fl{\left|\frac{p}{q}\right|} \le 1, \quad \left|\frac{p}{q}\right| \le 1 + \epsilon .
\end{equation*}
Hence, all elements of $\bs{L}$ are bounded by $1 + \epsilon$, resulting in
\begin{equation}
    \label{eq:bound-l}
    \|\bs{L}\|_1 \le m(1 + \epsilon) .
\end{equation}
From~\cite{ref:accuracy-and-stability} we know that the error introduced in solving the linear system \eqref{eq:linear-systems-error-l} is bounded by
\begin{equation}
    \label{eq:bound-dl}
    \boxed{
        \|\e\bs{L}\|_1 \le \gamma_m \|\bs{L}\|_1 \le \gamma_m m(1 + \epsilon)
    }
\end{equation}
where we recall that $\gamma_m = \frac{m\epsilon}{1 - m\epsilon}$.

\subsection{Backward Error in Applying the Update Factors}
Solving \eqref{eq:linear-systems-error-g} is not done in one step since only the lower triangular matrices $\Gamma^{(i)-1}$ and permutation matrices $\Pi^{i}$ are stored and used directly.
Therefore, to obtain $\bs{z}$ we need to go through a series of matrix multiplications:
\begin{equation*}
    \begin{aligned}
        \bs{z} & = \fl{\G^{(i)-1}\bs{y}} = \fl{\bs{C}^{(1)-1}\bs{C}^{(2)-1}\dots \bs{C}^{(i)-1}\bs{y}}                                                                                                                         \\
               & =\fl{\bs{\Gamma}^{(1)}_{m-1}\bs{\Pi}^{(1)}_{m-1}\dots \bs{\Gamma}^{(1)}_{l_1}\bs{\Pi}^{(1)}_{l_1} \dots  \bs{\Gamma}^{(i)}_{m-1}\bs{\Pi}^{(i)}_{m-1}\dots \bs{\Gamma}^{(i)}_{l_i}\bs{\Pi}^{(i)}_{l_i} \bs{y}} \\
    \end{aligned}
\end{equation*}
where $l_i$ is the index of the leaving variable at the $i$-th iteration.
Consider a single computation on an arbitrary iteration $i$:
\begin{equation*}
    \bs{d} = \fl{\bs{\Gamma}^{(i)}_j\bs{a}}
\end{equation*}
with a generic vector $\bs{a} \in \R^m$, $1 \le j \le m - 1$.
Component-wise, we have
\begin{equation*}
    d_k = \begin{cases}
        a_k                                                                          & \text{if } k \neq j + 1 \\
        \frac{a_{j+1} - g^{(i)}_j a_j (1 + \e^{(i)}_{j,\times})}{1 + \e^{(i)}_{j,-}} & \text{otherwise} ,
    \end{cases}
\end{equation*}
where $\e^{(i)}_{j,\times}$ and $\e^{(i)}_{j,-}$, both bounded by $\epsilon$, are the perturbations introduced by the multiplication and the subtraction, respectively.
Note that, to simplify the following analysis, we assume that $|g^{(i)}_j| \le 1$ exactly and ignore the possible error in their generation, its weight absorbed by $\e^{(i)}_{j,\times}$.
Knowing that $a_k = d_k$ for $k \neq j + 1$, we can write
\begin{equation*}
    a_{j+1} = d_{j+1}(1 + \e^{(i)}_{j,-}) + g^{(i)}_jd_j(1+\e^{(i)}_{j,\times}) .
\end{equation*}
Hence, we can determine the backward error $\e \bs{\Gamma}^{(i)-1}_j$ from the equation $\bs{a} = (\bs{\Gamma}^{(i)-1}_j + \e\bs{\Gamma}^{(i)-1}_j)\bs{d}$ as
\begin{equation*}
    \e\bs{\Gamma}_j^{(i)-1} = \begin{blockarray}{ccccccc}
        1 &   & j & j + 1  && m   \\
        \begin{block}{[cccccc]c}
            0      & \dots  & 0                             & 0              & \dots  & 0      & 1     \\
            \vdots & \ddots &                               &                &        & \vdots &       \\
            0      & \dots  & 0                             & 0              & \dots  & 0      & j     \\
            0      & \dots  & -g^{(i)}_j\e^{(i)}_{j,\times} & \e^{(i)}_{j,-} & \dots  & 0      & j + 1 \\
            \vdots &        &                               &                & \ddots & \vdots &       \\
            0      & \dots  & 0                             & 0              & \dots  & 0      & m     \\
        \end{block}
    \end{blockarray} .
\end{equation*}
Therefore, in order to find $\e\G^{(i)}$ in~\eqref{eq:linear-systems-error-g} we can use the expression
\begin{equation*}
    \begin{aligned}
        \e\G^{(i)} & =  \bs{\Pi}^{(1)}_{l_1}[\bs{\Gamma}^{(1)-1}_{l_1} + \e\bs{\Gamma}^{(1)-1}_{l_1}] \dots \bs{\Pi}^{(1)}_{m-1}[\bs{\Gamma}^{(1)-1}_{m-1} + \e\bs{\Gamma}^{(1)-1}_{m-1}] \dots     \\
                   & \bs{\Pi}^{(i)}_{l_i}[\bs{\Gamma}^{(i)-1}_{l_i} + \e\bs{\Gamma}^{(i)-1}_{l_i}] \dots \bs{\Pi}^{(i)}_{m-1}[\bs{\Gamma}^{(i)-1}_{m-1} + \e\bs{\Gamma}^{(i)-1}_{m-1}] - \G^{(i)} .
    \end{aligned}
\end{equation*}
Note that the error terms do not need to be added to the permutation matrices since no floating-point computation is performed with them.
Introducing
\begin{equation*}
    \tbs{C}^{(i)} = \bs{\Pi}^{(i)}_{l_i}[\bs{\Gamma}^{(i)-1}_{l_i} + \e\bs{\Gamma}^{(i)-1}_{l_i}] \dots \bs{\Pi}^{(i)}_{m-1}[\bs{\Gamma}^{(i)-1}_{m-1} + \e\bs{\Gamma}^{(i)-1}_{m-1}] ,
\end{equation*}
we can write the more compact expression
\begin{equation}
    \label{eq:ctilde-minus-c}
    \e\G^{(i)} = \tbs{C}^{(1)}\tbs{C}^{(2)}\dots \tbs{C}^{(i)} - \bs{C}^{(1)}\bs{C}^{(2)}\dots \bs{C}^{(i)} .
\end{equation}
\new{It is easy to prove (see \autoref{app:matrix-expression}) that the following identity holds for matrices $\bs{M}_1\bs{M}_2\dots \bs{M}_p$ and $\bs{N}_1\bs{N}_2\dots \bs{N}_p$:}
\begin{equation*}
    \begin{aligned}
         & \bs{M}_1\bs{M}_2\dots \bs{M}_p - \bs{N}_1\bs{N}_2\dots \bs{N}_p =                                  \\
         & (\bs{M_1} - \bs{N}_1)\bs{N}_2\dots \bs{N}_p + \bs{M}_1(\bs{M}_2 - \bs{N}_2)\bs{N}_3\dots\bs{N}_p + \\
         & +\dots + \bs{M}_1\bs{M}_2\dots\bs{M}_{p-1}(\bs{M}_p - \bs{N}_p) .
    \end{aligned}
\end{equation*}
Thus, to bound $|\e\G^{(i)}|$ we can use the bounds on
\begin{equation*}
    \begin{gathered}
        |\tbs{C}^{(1)}\dots\tbs{C}^{(k)}| \\
        |\tbs{C}^{(k)} - \bs{C}^{(k)}| \\
        |\bs{C}^{(k)}\dots\bs{C}^{(i)}|
    \end{gathered}
\end{equation*}
for each $k = 1, \dots, i$.

Since all permutation matrices are self-inverse, we can rearrange the terms to obtain
\begin{equation*}
    \bs{C}^{(i)} = [\bs{\Pi}^{(i)}_{l_i}\bs{\Pi}^{(i)}_{l_i + 1}\dots\bs{\Pi}^{(1)}_{m-1}]\bs{\Omega}^{(i)}_{l_i}\bs{\Omega}^{(i)}_{l_i + 1}\dots\bs{\Omega}^{(i)}_{m-1}
\end{equation*}
and
\begin{equation*}
    \tbs{C}^{(i)} = [\bs{\Pi}^{(i)}_{l_i}\bs{\Pi}^{(i)}_{l_i + 1}\dots\bs{\Pi}^{(1)}_{m-1}][\bs{\Omega}^{(i)}_{l_i} + \e\bs{\Omega}^{(i)}_{l_i}][\bs{\Omega}^{(i)}_{l_i + 1} + \e\bs{\Omega}^{(i)}_{l_i + 1}]\dots[\bs{\Omega}^{(i)}_{m-1} + \e\bs{\Omega}^{(i)}_{m-1}] ,
\end{equation*}
where
\begin{equation*}
    \begin{aligned}
         & \bs{\Omega}^{(i)}_{m-1} = \bs{\Gamma}^{(i)-1}_{m-1}                                                                                                                                           \\
         & \bs{\Omega}^{(i)}_{m-2} = \bs{\Pi}^{(i)}_{m-1}\bs{\Gamma}^{(i)-1}_{m-2}\bs{\Pi}^{(i)}_{m-1}                                                                                                   \\
         & \bs{\Omega}^{(i)}_{m-3} = \bs{\Pi}^{(i)}_{m-1}\bs{\Pi}^{(i)}_{m-2}\bs{\Gamma}^{(i)-1}_{m-2}\bs{\Pi}^{(i)}_{m-2}\bs{\Pi}^{(i)}_{m-1}                                                           \\
         & \vdots                                                                                                                                                                                        \\
         & \bs{\Omega}^{(i)}_{l_i} = \bs{\Pi}^{(i)}_{m-1}\bs{\Pi}^{(i)}_{m-2}\dots\bs{\Pi}^{(i)}_{l_i + 1}\bs{\Gamma}^{(i)-1}_{l_i}\bs{\Pi}^{(i)}_{l_i + 1}\dots\bs{\Pi}^{(i)}_{m-2}\bs{\Pi}^{(i)}_{m-1} \\
    \end{aligned}
\end{equation*}
and
\begin{equation*}
    \begin{aligned}
         & \bs{\Omega}^{(i)}_{m-1} + \e\bs{\Omega}^{(i)}_{m-1} = \bs{\Gamma}^{(i)-1}_{m-1} + \e\bs{\Gamma}^{(i)-1}_{m-1}                                                                                                                                               \\
         & \bs{\Omega}^{(i)}_{m-2} + \e\bs{\Omega}^{(i)}_{m-2} = \bs{\Pi}^{(i)}_{m-1}[\bs{\Gamma}^{(i)-1}_{m-2} + \e\bs{\Gamma}^{(i)-1}_{m-2}]\bs{\Pi}^{(i)}_{m-1}                                                                                                     \\
         & \bs{\Omega}^{(i)}_{m-3} + \e\bs{\Omega}^{(i)}_{m-3} = \bs{\Pi}^{(i)}_{m-1}\bs{\Pi}^{(i)}_{m-2}[\bs{\Gamma}^{(i)-1}_{m-2} + \e\bs{\Gamma}^{(i)-1}_{m-2}]\bs{\Pi}^{(i)}_{m-2}\bs{\Pi}^{(i)}_{m-1}                                                             \\
         & \vdots                                                                                                                                                                                                                                                      \\
         & \bs{\Omega}^{(i)}_{l_i} + \e\bs{\Omega}^{(i)}_{l_i} = \bs{\Pi}^{(i)}_{m-1}\bs{\Pi}^{(i)}_{m-2}\dots\bs{\Pi}^{(i)}_{l_i + 1}[\bs{\Gamma}^{(i)-1}_{l_i} + \e\bs{\Gamma}^{(i)-1}_{l_i}]\bs{\Pi}^{(i)}_{l_i + 1}\dots\bs{\Pi}^{(i)}_{m-2}\bs{\Pi}^{(i)}_{m-1} . \\
    \end{aligned}
\end{equation*}
Note that each $\bs{\Omega}^{(i)}_j$ is a lower triangular matrix with ones on the diagonal and a single non-zero entry somewhere in the sub-diagonal of the $j$-th column whose magnitude is bounded by $1$.
\new{Recalling the definition of $\LI$ from \autoref{sec:preliminaries},} this allows us to determine a bound for their product as
\begin{equation*}
    \left|\prod_{k = l_i}^{m-1}\bs{\Omega}^{(i)}_k \right| \le \LI ,
\end{equation*}
and noticing that $\bs{C}^{(i)}$ is the product of the permutation matrices and the $\bs{\Omega}^{(i)}_j$ matrices, we can write
\begin{equation}
    \label{eq:bound-ci}
    \|\bs{C}^{(i)}\|_1 = \left\| [\bs{\Pi}^{(i)}_{l_i}\bs{\Pi}^{(i)}_{l_i + 1}\dots\bs{\Pi}^{(1)}_{m-1}] \left|\prod_{k = l_i}^{m-1}\bs{\Omega}^{(i)}_k \right|\right\|_1 \le m .
\end{equation}
or, referring to $\G^{(i)}$,
\begin{equation}
    \label{eq:bound-gi}
    \|\G^{(i)}\|_1 \le m^{i} .
\end{equation}
Due to the derivation of $\e\bs{\Omega}^{(i)}_j$ from $\bs{\Gamma}^{(i)-1}_j$, with $1 \le j \le m-1$, we know that
\begin{equation*}
    \e\bs{\Omega}_j^{(i)} = \begin{blockarray}{cccccccc}
        1 &   & j & & \pi_j  && m   \\
        \begin{block}{[ccccccc]c}
            0      & \dots & 0                             & \dots & 0              & \dots  & 0      & 1      \\
            \vdots &       &                               &       &                &        & \vdots &        \\
            0      & \dots & -g^{(i)}_j\e^{(i)}_{j,\times} & \dots & \e^{(i)}_{j,-} & \dots  & 0      & \pi_j \\
            \vdots &       &                               &       &                & \ddots & \vdots &        \\
            0      & \dots & 0                             & \dots & 0              & \dots  & 0      & m      \\
        \end{block}
    \end{blockarray},
\end{equation*}
where the index $\pi_j$, $j < \pi_j \le m$ depends on the permutations applied to $\bs{\Gamma}^{(i)-1}_j$.
Given that $|g^{(i)}_j| \le 1$ and |$\e^{(i)}_{j,-}|, |\e^{(i)}_{j,\times}| \le \epsilon$, we know that all elements of $| \bs{\Omega}^{(i)}_j + \e\bs{\Omega}^{(i)}_j |$ are bounded by $(1 + \epsilon)$.
Therefore, we can bound $\tbs{C}^{(i)}$ with
\begin{equation}
    \label{eq:bound-tci}
    \|\tbs{C}^{(i)}\|_1 \le m(1 + \epsilon)^{m - 1} .
\end{equation}
We use this result to go back to~\eqref{eq:ctilde-minus-c} and find a bound for $|\tbs{C}^{(i)} - \bs{C}^{(i)}|$ by expanding the expression
\begin{equation*}
    \begin{aligned}
         & |[\bs{\Omega}^{(i)}_{l_i} - \e\bs{\Omega}^{(i)}_{l_i}][\bs{\Omega}^{(i)}_{l_i + 1} - \e\bs{\Omega}^{(i)}_{l_i + 1}]\dots[\bs{\Omega}^{(i)}_{m-1} - \e\bs{\Omega}^{(i)}_{m-1}]
        - \bs{\Omega}^{(i)}_{l_i}\bs{\Omega}^{(i)}_{l_i+1}\dots\bs{\Omega}^{(i)}_{m-1}| \le                                                                                                                                                                                       \\
         & |\e\bs{\Omega}^{(i)}_{l_i}\bs{\Omega}^{(i)}_{l_i + 1}\dots\bs{\Omega}^{(i)}_{m-1}| + |\bs{\Omega}^{(i)}_{l_i}\e\bs{\Omega}^{(i)}_{l_i + 1}\dots\bs{\Omega}^{(i)}_{m-1}| + \dots + |\bs{\Omega}^{(i)}_{l_i}\bs{\Omega}^{(i)}_{l_i + 1}\dots\e\bs{\Omega}^{(i)}_{m-1}| + \\
         & +|\e\bs{\Omega}^{(i)}_{l_i}\e\bs{\Omega}^{(i)}_{l_i + 1}\dots\bs{\Omega}^{(i)}_{m-1}| + \dots + |\bs{\Omega}^{(i)}_{l_i}\bs{\Omega}^{(i)}_{l_i + 1}\dots\e\bs{\Omega}^{(i)}_{m-2}\e\bs{\Omega}^{(i)}_{m-1}| + \dots +                                                  \\
         & +|\e\bs{\Omega}^{(i)}_{l_i}\e\bs{\Omega}^{(i)}_{l_i + 1}\dots\e\bs{\Omega}^{(i)}_{m-1}| .
    \end{aligned}
\end{equation*}
Note that the first $m - l_i$ terms in the right-hand side can be bounded by the expression
\begin{equation*}
    |\bs{\Omega}^{(i)}_{l_i}\dots\e\bs{\Omega}^{(i)}_{j}\dots\bs{\Omega}^{(i)}_{m-1}| \le \LI|\e\bs{\Omega}^{(i)}_{j}|\LI \quad \forall j \in \{l_i, \dots, m-1\} .
\end{equation*}
Computing the first multiplication, $\LI|\e\bs{\Omega}^{(i)}_{j}|$, yields a matrix with the structure
\begin{equation*}
    \LI|\e\bs{\Omega}^{(i)}_{j}| = \begin{blockarray}{cccccccc}
        1 &   & j & & \pi_j  && m   \\
        \begin{block}{[ccccccc]c}
            0      & \dots & 0                             & \dots & 0              & \dots  & 0      & 1         \\
            \vdots &       &                               &       &                &        & \vdots &           \\
            0      & \dots & -g^{(i)}_j\e^{(i)}_{j,\times} & \dots & \e^{(i)}_{j,-} & \dots  & 0      & \pi_j     \\
            0      & \dots & -g^{(i)}_j\e^{(i)}_{j,\times} & \dots & \e^{(i)}_{j,-} & \dots  & 0      & \pi_j + 1 \\
            \vdots &       & \vdots                        &       & \vdots         & \ddots & \vdots &           \\
            0      & \dots & -g^{(i)}_j\e^{(i)}_{j,\times} & \dots & \e^{(i)}_{j,-} & \dots  & 0      & m         \\
        \end{block}
    \end{blockarray}.
\end{equation*}
From this, we can determine the element-wise bound
\begin{equation*}
    \LI|\e\bs{\Omega}^{(i)}_{j}|\LI \le (|g_j^{(i)}\e^{(i)}_{j,\times}| + |\e^{(i)}_{j,-}|) \LI \le 2\epsilon\LI, \quad \forall j \in \{l_i, \dots, m-1\} .
\end{equation*}
In a similar fashion, the next $\frac{(m - l_i)(m - l_i - 1)}{2}$ terms are bounded element-wise by
\begin{equation*}
    |\bs{\Omega}^{(i)}_{l_i}\dots\e\bs{\Omega}^{(i)}_{j_1}\dots\e\bs{\Omega}^{(i)}_{j_2}\dots\bs{\Omega}^{(i)}_{m-1}| \le \LI|\e\bs{\Omega}^{(i)}_{j_1}|\LI|\e\bs{\Omega}^{(i)}_{j_2}|\LI \quad \forall j_1, j_2 : l_i \le j_1 < j_2 \le m-1.
\end{equation*}
Expanding the expression, we notice that $j < \pi_j$ ensures that, in the product of any number of these matrices, column $j$ of all but the last such matrix makes no contribution to the result.
Hence
\begin{equation*}
    \LI|\e\bs{\Omega}^{(i)}_{j_1}|\LI |\e\bs{\Omega}^{(i)}_{j_2}|\LI \le |\e^{(i)}_{j_1,-}|(|g^{(i)}_{j_2}\e^{(i)}_{j_2,\times}| + |\e^{(i)}_{j_2,-}|) \LI \quad \forall j_1, j_2 : l_i \le j_1 < j_2 \le m-1.
\end{equation*}
We can generalize the above inequality to
\begin{equation*}
    \LI|\e\bs{\Omega}^{(i)}_{j_1}|\LI|\e\bs{\Omega}^{(i)}_{j_2}|\LI \dots \LI\e\bs{\Omega}^{(i)}_{j_N}\LI \le \left(\prod_{h = 1}^{N - 1}|\e^{(i)}_{j_h,-}|\right) (|g^{(i)}_{j_N}\e^{(i)}_{j_N,\times}| + |\e^{(i)}_{j_N,-}|) \LI \le 2\epsilon^{N} \LI .
\end{equation*}
Therefore, also taking into account the final permutation, the right side of~\eqref{eq:ctilde-minus-c} can be bounded by
\begin{equation}
    \label{eq:bound-dci}
    \begin{aligned}
         & \|\e\bs{C}^{(i)}\|_1 = \|\tbs{C}^{(i)} - \bs{C}^{(i)}\|_1 \le 2m\left[ (m-1)\epsilon + \binom{m-1}{2}\epsilon^2 + \dots + \epsilon^{m-1} \right] = \\
         & = 2m((1 + \epsilon)^{m-1} - 1) .
    \end{aligned}
\end{equation}

We can finally put everything together.
Given the results from~\eqref{eq:bound-tci}, \eqref{eq:bound-dci} and \eqref{eq:bound-ci} we can bound the error in~\eqref{eq:linear-systems-error-g} as
\begin{equation*}
    \begin{aligned}
        \|\e\G^{(i)}\|_1 & = \|\tbs{C}^{(1)}\tbs{C}^{(2)}\dots \tbs{C}^{(i)} - \bs{C}^{(1)}\bs{C}^{(2)}\dots \bs{C}^{(i)}\|_1 \\
                         & \le i \|\tbs{C}^{(1)}\tbs{C}^{(2)}\dots \tbs{C}^{(i-1)}\|_1\|\tbs{C}^{(i)} - \bs{C}^{(i)}\|_1      \\
                         & \le i[m(1 + \epsilon)^{m - 1}]^{i-1}  2m((1 + \epsilon)^{m-1} - 1) .                               \\
    \end{aligned}
\end{equation*}
Thus
\begin{equation}
    \label{eq:bound-dgi}
    \boxed{
        \|\e\G^{(i)}\|_1 \leq 2im^i(1 + \epsilon)^{(i-1)(m - 1)}((1 + \epsilon)^{m-1} - 1) .
    }
\end{equation}

\subsection{Bounds for the Inverse Updated Factorization}
Inspired by~\cite{ref:error-growth-bartels-golub}, let us examine the results of the product
\begin{equation*}
    \bs{d} = \fl{|\bs{\Gamma}^{(i)}_j\bs{\Pi}^{(i)}_j\bs{a}|}
\end{equation*}
with a generic $\bs{a} \in \R^m$, $l_i \le j \le m - 1$.
We notice that
\begin{equation*}
    d_k \le \begin{cases}
        |a_k|                                 & \text{if } k \ne j \wedge k \ne j + 1 \\
        \max(|a_k|, |a_{k+1}|)                & \text{if } k = j                      \\
        (|a_k| + |a_{k+1}|)(1 + \epsilon)^{3} & \text{if } k = j + 1 .
    \end{cases}
\end{equation*}
We are arbitrarily making the inequality stronger by using $(1 + \epsilon)^{3}$ instead of $(1 + \epsilon)$.
We can extend this to the sequence of products, $\bs{C}^{(i)-1}$ and perform a similar analysis as
\begin{equation}
    \label{eq:ca}
    \bs{c} = \fl{|\bs{C}^{(i)-1}\bs{a}|} = \fl{|\bs{\Gamma}^{(i)}_{m-1}\bs{\Pi}^{(i)}_{m-1}\dots\bs{\Gamma}^{(i)}_{l_i}\bs{\Pi}^{(i)}_{l_i}\bs{a}|}
\end{equation}
to find that
\begin{equation}
    \label{eq:ca-cases}
    c_k \le \begin{cases}
        \max((1+\epsilon)^{3k}\sum_{v = 1}^{k}|a_v|, |a_{k+1}|) & \text{if } k < m   \\
        (1+\epsilon)^{3m}\sum_{v = 1}^{m}|a_v|                  & \text{if } k = m .
    \end{cases}
\end{equation}
If we then employ the vector norm
\begin{equation}
    \label{eq:norm}
    \|\x\|_\star = \max_{1 \le i \le n}2^{-i}(1 + \epsilon)^{-3i}|x_i| \quad \forall \x \in \R^n ,
\end{equation}
we deduce the condition
\begin{equation*}
    \| \bs{C}^{(i)-1} \bs{a} \|_\star \le 2(1 + \epsilon)^{3} \|\bs{a}\|_\star .
\end{equation*}
See~\autoref{app:star-norm} for proof.
Because $\bs{g}^{(i)} = \bs{G}^{(i)-1}\bs{L}^{-1} = \bs{C}^{(i)-1}\G^{(i - 1)-1}\bs{L}^{-1}$, we can write
\begin{equation}
    \label{eq:gj}
    \|\bs{g^{(i)}_{:,j}}\|_\star \le 2^{i}(1 + \epsilon)^{3i}\|\bs{L}^{-1}_{:,j}\|_\star, \quad 1 \le j \le m .
\end{equation}
Given $\bs{L}$'s values, we can write
\begin{equation*}
    |\bs{L}^{-1}_{k,j}| \le \begin{cases}
        1                                          & \text{if } k = j \\
        2^{k - j - 1}(1 + \epsilon)^{3(k - j) - 2} & \text{if } k > j \\
        0                                          & \text{otherwise}
    \end{cases}
\end{equation*}
which implies the equation
\begin{equation}
    \label{eq:lj-1}
    \|\bs{L}^{-1}_{:,j}\|_\star = 2^{-j}(1 + \epsilon)^{-3j} .
\end{equation}
Finally, together with~\eqref{eq:norm} and~\eqref{eq:gj}, we get
\begin{equation*}
    |g^{(i)}_{k,j}| \le 2^{i + k - j - 1}(1 + \epsilon)^{3(i + k - j) - 2},
\end{equation*}
which we can transform into
\begin{equation}
    \label{eq:bound-gli}
    \boxed{
        \|\bs{G}^{(i)-1}\bs{L}^{-1}\|_1 \le m 2^{m + n - 1}(1 + \epsilon)^{3(m + n) - 2} .
    }
\end{equation}

\subsection{Weighted-Norm Bound for the Update Factors}
\label{app:star-norm}
Employing the vector norm $\|\cdot\|_\star$ \eqref{eq:norm}, we prove that $ \| \bs{T} \bs{a} \|_\star \le 2(1 + \epsilon)^{3}\|\bs{a}\|_\star$, where $|\bs{T}\bs{a}|$ behaves as~\eqref{eq:ca},~\eqref{eq:ca-cases}, by induction over $m$, which is the size of the square matrix $\bs{T}$ and vector $\bs{a}$.
For $m = 1$, we have
\begin{equation}
    \| \bs{T} \bs{a} \|_\star = \frac{1}{2}|a_1| \le (1 + \epsilon)^3|a_1| \le 2(1 + \epsilon)^{3} \|\bs{a}\|_\star .
\end{equation}
Assuming that the statement is true for $m = k$, we will prove it for $m = k + 1$.
We know that $\|[\bs{T}\bs{a}]_{1:k-1}\|_\star \le 2(1 + \epsilon)^{3}\|\bs{a}\|_\star$.
The $k$-th element of $\bs{t} = |\bs{T}\bs{a}|$ is given by
\begin{equation}
    t_k = \max((1 + \epsilon)^{3k}\sum_{i=1}^{k}|a_i|, |a_{k+1}|) .
\end{equation}
If $t_k = |a_{k+1}|$, then we know that
\begin{equation}
    t_{k+1} = (1 + \epsilon)^{3(k + 1)}\sum_{i=1}^{k + 1}|a_i| \le 2(1 + \epsilon)^{3(k + 1)}|a_{k+1}| .
\end{equation}
Since they are at the $k$-th and $k+1$-th position in the vector respectively, their contribution in the norm is bounded by
\begin{equation}
    \begin{aligned}
         & 2^{-k}(1 + \epsilon)^{-3k}|a_{k+1}| \le 2(1 + \epsilon)^3\|\bs{a}\|_\star . \\
    \end{aligned}
\end{equation}
If on the other hand $t_k = (1 + \epsilon)^{3k}\sum_{i=1}^{k}|a_i|$, then the $k$-th element has remained the same, and the last element is
\begin{equation}
    t_{k+1} = (1 + \epsilon)^{3(k + 1)}\sum_{i=1}^{k + 1}|a_i| \le 2(1 + \epsilon)^{3(k + 1)}\sum_{i=1}^{k}|a_i| = 2(1 + \epsilon)^{3}t_{k} ,
\end{equation}
hence the value of $\| \bs{T} \bs{a} \|_\star$ does not change.

\subsection{Backward Error in Solving with the Upper Triangular Factor}
Assuming the factorization utilizes partial pivoting, we can bound the triangular factor of the first iteration by
\begin{equation}
    \label{eq:bound-u0}
    \begin{aligned}
        \|\U^{(0)}\|_1 & \le m(2(1 + \epsilon)^3)^{m-1}\|A\|_{1} .
    \end{aligned}
\end{equation}
See~\autoref{app:element-wise-bounds-u} for more details.
We then check for $i > 0$.
We know that $\U^{(i)}$ is computed by applying the update factors to $\bs{H}^{(i)}$, which corresponds to a Gaussian elimination over an upper Hessenberg matrix.
Therefore, as shown in~\autoref{app:element-wise-bounds-hessenberg}, the values of $\U^{(i)}$ are bounded by the values of $\bs{H}^{(i)}$ as
\begin{equation}
    \label{eq:bound-ui}
    \|\U^{(i)}\|_1 \le m^2(1 + \epsilon)^{3(m-1)}\|\bs{H}^{(i)}\|_{1} .
\end{equation}
Next, we need a bound on $\bs{H}^{(i)}$, which we find with the expression
\begin{equation*}
    \|\bs{H}^{(i)}\|_1 \le \max(\|(\U^{(i-1)}\|_{1}, \|\bs{H}^{(i)}_{:,m}\|_{1})),
\end{equation*}
where
\begin{equation*}
    \bs{H}^{(i)}_{:,m} = \fl{\bs{G}^{(i)-1}\bs{L}^{-1}\A_{:,e_i}},
\end{equation*}
which can be bounded using~\eqref{eq:bound-gli} as
\begin{equation}
    \label{eq:bound-him}
    \|\bs{H}^{(i)}_{:,m}\|_1 = (1 + \epsilon)^{m}\|\G^{(i)-1}\bs{L}^{-1}\|_1\|(\A_{:,e_i})\|_1 .
\end{equation}
The coefficient $(1 + \epsilon)^{m}$ accounts for the computed product of $\G^{(i)-1}\bs{L}^{-1}$ with $\A_{:,e_i}$.
Similar to what we did for $\bs{L}$, we can determine
\begin{equation}
    \label{eq:bound-dui}
    \boxed{
        \|\e\U^{(i)}\|_1 \le \gamma_m  \|\U^{(i)}\|_1
    }
\end{equation}
for~\eqref{eq:linear-systems-error-u}.

\subsection{Backward Error of the Updated Basis}
The first step is to take the initial basis, $\B^{(0)}$, described in~\autoref{sec:preliminaries}, and factorize it into $\bs{L}\U^{(0)}$.
From~\cite{ref:accuracy-and-stability}, we know that
\begin{equation*}
    \B^{(0)} + \e\B^{(0)} = \bs{L}\U^{(0)} \text{ and } |\e\B^{(0)}| \le \gamma_m|\bs{L}||\U^{(0)}| .
\end{equation*}
This produces the bound
\begin{equation}
    \label{eq:bound-db0}
    \|\e\B^{(0)}\|_1 \le \gamma_m 2^{m-1}m^2(1 + \epsilon)^{3m - 2}\|A\|_{1} .
\end{equation}
At the $i$-th iteration, with $i > 0$, we have, column-wise,
\begin{equation*}
    \begin{aligned}
        \B^{(i)} + \e\B^{(i)} & = \begin{bmatrix}
                                      \B^{(i)}_{:,1} & \B^{(i)}_{:,2} & \cdots & \B^{(i)}_{:,m}
                                  \end{bmatrix} + \begin{bmatrix}
                                                      \e\B^{(i)}_{:,1} & \e\B^{(i)}_{:,2} \cdots & \e\B^{(i)}_{:,m} \\
                                                  \end{bmatrix} \\
                              & = \bs{L} \G^{(i)} \begin{bmatrix}
                                                      \U^{(i)}_{:,1} & \U^{(i)}_{:,2} \cdots & \U^{(i)}_{:,m} \\
                                                  \end{bmatrix} .       \\
    \end{aligned}
\end{equation*}
Going to the $(i+1)$-st iteration, we can build $\B^{(i+1)}$ by removing the $l_{i+1}$-th column of $\B^{(i)}$ and appending the column $\A_{e_{i+1}}$.
A similar operation is also performed on $\U^{(i)}$, dropping column $\U^{(i)}_{:,l_{i+1}}$ and appending the column $\bs{H}^{(i+1)}_m$, which is computed as
\begin{equation*}
    \G^{(i)-1}\bs{L}^{-1}\A_{e_{i+1}} = \bs{H}^{(i+1)}_m .
\end{equation*}
We can rearrange the terms to obtain the backward error $\e\A_{e_{i+1}}$ as
\begin{equation*}
    \A_{e_{i+1}} + \e\A_{e_{i+1}} = (\G^{(i)} + \e\G^{(i)})(\bs{L} + \e\bs{L})\bs{H}^{(i+1)}_m ,
\end{equation*}
which implies
\begin{equation*}
    \e\A_{e_{i+1}} = (\bs{L}\e\G^{(i)} + \e\bs{L}\G^{(i)} + \e\bs{L}\e\G^{(i)})\bs{H}^{(i+1)}_m .
\end{equation*}
Therefore, column-wise, we write
\begin{equation*}
    \begin{aligned}
         & \begin{bmatrix}
               \B^{(i)}_{:,1} & \cdots & \B^{(i)}_{:,l_{i-1} - 1} & \B^{(i)}_{:,l_{i+1} + 1} & \cdots & \B^{(i)}_{:,m} & \A_{e_{i+1}} \\
           \end{bmatrix}             \\
         & + \begin{bmatrix}
                 \e\B^{(i)}_{:,1} & \cdots & \e\B^{(i)}_{:,l_{i-1} - 1} & \e\B^{(i)}_{:,l_{i+1} + 1} & \cdots & \e\B^{(i)}_{:,m} & \e\A_{e_{i+1}} \\
             \end{bmatrix} = \\
         & = \B^{(i + 1)} + \Delta\B^{(i + 1)} =                                                                                                                         \\
         & = \bs{L} \G^{(i)} \begin{bmatrix}
                                 \U^{(i)}_{:,1} & \cdots & \U^{(i)}_{:,l_{i+1} - 1} & \U^{(i)}_{:,l_{i+1} + 1} & \cdots & \U^{(i)}_{:,m} & \bs{H}^{(i+1)}_{:,m} \\
                             \end{bmatrix} =              \\
         & = \bs{L} \G^{(i)} \bs{H}^{(i+1)} .
    \end{aligned}
\end{equation*}
We recall that $\bs{C}^{(i + 1)-1}$ is the matrix such that $\bs{C}^{(i + 1)-1}\bs{H}^{(i + 1)} = \U^{(i + 1)}$, and $\G^{(i + 1)} = \G^{(i)}\bs{C}^{(i + 1)}$.
Hence, taking into account the error term, we can write
\begin{equation*}
    \bs{H}^{(i + 1)} = (\bs{C}^{(i+1)} + \e\bs{C}^{(i+1)})\bs{U}^{(i+1)}
\end{equation*}
Therefore,
\begin{equation*}
    \B^{(i + 1)} + \Delta\B^{(i+1)} = \bs{L}\G^{(i+1)}\U^{(i+1)} + \bs{L}\G^{(i + 1)}\e\bs{C}^{(i+1)}\U^{(i+1)} ,
\end{equation*}
and, extracting all the error terms, we find
\begin{equation*}
    \e\B^{(i+1)} = \Delta\B^{(i+1)} - \bs{L}\G^{(i+1)}\e\bs{C}^{(i+1)}\U^{(i+1)} ,
\end{equation*}
which allows us to bound the error $\e\B^{(i+1)}$ in~\eqref{eq:lu-error-b} as
\begin{equation}
    \label{eq:bound-dbi}
    \boxed{
        \|\e\B^{(i+1)}\|_1 \le \max(\|\e\B^{(i)}\|_1, \|\e\bs{A}_{e_{i+ 1}}\|_1) + \|\bs{L}\|_1\|\G^{(i+1)}\|_1\|\e\bs{C}^{(i+1)}\|_1\|\U^{(i+1)}\|_1 ,
    }
\end{equation}
with
\begin{equation}
    \begin{aligned}
        \label{eq:bound-dai}
         & \|\e\bs{A}_{e_{i + 1}}\|_1 \le (m \|\e\G^{(i)}\|_1 + m^i\|\e\bs{L}\|_1 + \|\e\bs{G}^{(i)}\|_1\|\e\bs{L}\|_1)\|\bs{H}^{(i+1)}_{:,m}\|_1 \\
         & \|\e\bs{C}^{(i+1)}\|_1\|\U^{(i+1)}\|_1 \le m \|\U^{(i + 1)}\|_1 ,
    \end{aligned}
\end{equation}
assuming $2((1 + \epsilon)^{m} - 1) \le 1$.

\subsection{Proof of~\autoref{lm:errors-linear-systems}}
\begin{proof}
    Expanding~\eqref{eq:define-e} with $\hbs{x}$ and $\hbs{b}$, we obtain
    \begin{equation*}
        \begin{aligned}
             & (\B + \Epsilon)(\x + \e\x) = \bs{b} + \e\bs{b}   \\
             & \B\e\x  + \Epsilon\e\x = \e\bs{b} - \Epsilon\x . \\
        \end{aligned}
    \end{equation*}
    The basis is assumed to be invertible, therefore we can write
    \begin{equation*}
        (\bs{I} + \B^{-1}\Epsilon)\e\x = \B^{-1}(\e\bs{b} - \Epsilon\x) .
    \end{equation*}
    Since $\B^{-1}$ remains constant as $\epsilon \to 0$, the value of $\bs{K} \coloneqq \B^{-1}\Epsilon$ is bounded by
    \begin{equation*}
        \begin{aligned}
            \bs{K} = O(\epsilon) & \qquad & \|\bs{K}\|_1 = O(\epsilon) .
        \end{aligned}
    \end{equation*}
    Thus, if we let $\bs{J} \coloneqq \bs{I} + \bs{K}$ there exists a value $\epsilon_0$ such that, as $\epsilon \to 0$, $\bs{J}$ is diagonally dominant, and hence invertible.
    Therefore, we can write
    \begin{equation}
        \label{eq:extended-linear-system}
        \e\x = \bs{J}^{-1}\B^{-1}(\e\bs{b} - \Epsilon\x) .
    \end{equation}
    Finally, the Neumann series
    \begin{equation*}
        \bs{J}^{-1} = \sum_{n = 0}^{\infty} \bs{K}^{n}
    \end{equation*}
    implies
    \begin{equation*}
        \|\bs{J}^{-1}\|_1 \le \sum_{n = 0}^{\infty}\| \bs{K}^{n} \|_1 = \| \bs{I} \|_1 + \sum_{n = 1}^{\infty}\| \bs{K}^{n} \|_1 = O(1) .
    \end{equation*}
    After expanding, every term in~\eqref{eq:extended-linear-system} is proportional to $O(\epsilon)$.
    Hence, we conclude
    \begin{equation*}
        \|\e\x\|_1 = O(\epsilon) .
    \end{equation*}
    \qed
\end{proof}

\subsection{Proof of~\autoref{lm:errors-reduced-costs}}
\begin{proof}
    We consider the element-wise error
    \begin{equation*}
        \hat{r}_j = \fl{c_j - \hbs{A}^{T}_{:,j}\hbs{y}} \quad 1 \le j \le n .
    \end{equation*}
    From~\cite{ref:accuracy-and-stability}, we know that
    \begin{equation*}
        \begin{aligned}
             & \exists \phi_j : |\phi_j| \le (1 + \epsilon)^{m+1} - 1 :                                                                            \\
             & \hat{r}_j \le (\hat{c}_j - \hbs{A}^{T}_{:,j}\hbs{y})(1 + \phi_j)                                                                    \\
             & r_j + \e r_j \le (c_j + \e c_j)(1 + \phi_j) - (\bs{A}^T_{:,j} + \e\bs{A}^T_{:,j})(\y + \e\y)(1 + \phi_j)                            \\
             & \e r_j \le c_j\phi_j + \e c_j(1 + \phi_j) - \e\bs{A}^T_{:,j}(\y + \e\y)(1 + \phi_j) + \bs{A}^T_{:,j}(\y\phi_j + \e\y(1 + \phi_j)) .
        \end{aligned}
    \end{equation*}
    Every component $\e r_j$ is proportional to $O(\epsilon)$.
    Hence, we conclude
    \begin{equation*}
        \|\e\bs{r}\|_1 = O(\epsilon) .
    \end{equation*}
    \qed
\end{proof}

\subsection{Proof of~\autoref{lm:errors-update}}
\begin{proof}
    Proving the correctness of unboundedness determination in~\autoref{thm:correct-algorithm} (see below) we show that, if $\epsilon$ is small enough, $\hat{d}_j > 0$ if and only if $d_j > 0$.
    For the purpose of determining limiting behavior as $\epsilon \to 0$, we use this property.
    Let $\bs{\eta} \in \R^{m}$ be a vector such that $\|\bs{\eta}\|_{\max} \le \epsilon$.
    Since $\hat{u}_j$ is a simple division, we can express it as
    \begin{equation*}
        \hat{u}_j = \fl{\hat{z}_{\base_j}/\hat{d}_j} = (1 + \eta_j)(\hat{z}_{\base_j}/\hat{d}_j) ,
    \end{equation*}
    from which we can derive
    \begin{equation*}
        \begin{aligned}
            \e u_j = \hat{u}_j - u_j & = (1 + \eta_j)\frac{\zbi[j] + \e \zbi[j]}{d_j + \e d_j} - \frac{\zbi[j]}{d_j} =             \\
                                     & = \frac{(1 + \eta_j)(\zbi[j] + \e \zbi[j])d_j - (d_j + \e d_j)\zbi[j]}{(d_j + \e d_j)d_j} = \\
                                     & = \frac{(d_j\eta_j - \e d_j)\zbi[j] + (1 + \eta_j)\e\zbi[j]d_j}{(d_j + \e d_j)d_j} .
        \end{aligned}
    \end{equation*}
    Assuming $\epsilon$ is small enough to satisfy the conditions in~\autoref{thm:correct-algorithm}, if $\hat{d}_j > 0$, then the denominator is strictly positive.
    Additionally, let $\dmin$ be the minimum positive value of $d_j$ across all bases and all $1 \le j \le m$.
    From~\autoref{lm:errors-linear-systems} we know that $\|\e \bs{d} \|_1 = O(\epsilon)$.
    It follows that
    \begin{equation*}
        \left\|\frac{1}{(d_j + \e d_j ) d_j}\right\|_1 \le \left\|\frac{1}{((\dmin - \epsilon)\dmin)}\right\|_1 = O(1) \text{ as } \epsilon \to 0.
    \end{equation*}
    This allows us to bound $\|\e \bs{u} \|_1$ with
    \begin{equation}
        \label{eq:bound-eu}
        \| \e \bs{u} \|_1 \le {\|(\eta \cdot \bs{d} - \e\bs{d}) \cdot \zb + (\1_m + \eta) \cdot  \e\zb \cdot \bs{d} \|_1} \left\|\frac{1}{(\dmin - \epsilon)\dmin}\right\|_1 ,
    \end{equation}
    where $\cdot$ indicates the element-wise product of two column vectors.
    Every term in the right-hand side of~\eqref{eq:bound-eu} is proportional to $O(\epsilon)$.
    Hence, we conclude
    \begin{equation*}
        \| \e \bs{u} \|_1  = O(\epsilon) .
    \end{equation*}
    \qed
\end{proof}

\subsection{Proof of~\autoref{thm:correct-algorithm} - Optimality determination and entering variable choice}
\label{pr:correct-algorithm-optimality-determination}
\begin{proof}
    Let $\hbs{r}$ be the reduced cost vector computed at iteration $i$ of the algorithm given tolerance $\tau$ and some precision with unit roundoff $\epsilon$,
    and let $\bs{r}$ be the same value computed by the algorithm with $\tau = 0$ and $\epsilon = 0$.
    Given that $\hbs{r} = \bs{r} + \e\bs{r}$, we want to prove that $r_j + \e r_j + \tau \ge 0$ if and only if $r_j \ge 0$, with $1 \le j \le n$.
    It is clear that the condition $\|\e\bs{r}\|_1 \le \tau$ is sufficient to guarantee that $r_j \ge 0$ implies $r_j + \e r_j + \tau \ge 0$ for all $j$.
    Next, we need to show that $r_j < 0$ implies $r_j + \e r_j + \tau < 0$ for all $j$, which comes naturally if $\|\e\bs{r}\|_1 \le \tau$ and $r_j < 0$ implies $r_j < 2\tau$.

    Since $\bs{r} = \bs{c} - \bs{A}^T\y = \bs{c}- \bs{A}^T\B^{-T}\cb$, $r_j$ is a deterministic function of $\bs{A}, \bs{c}$ and the basis $\B$, of which there are finitely many.
    We can therefore identify a maximum (i.e., least negative) value of $r_j$ across all possible bases, which we call $\rmax$;
    if $r_j$ is always non-negative, we set $\rmax$ to $-\infty$.
    It follows that $-\frac{1}{2}\rmax < \tau$ ensures that $r_j < 0$ implies $r_j < -2\tau$ for all bases and $j$.
    Hence, we need to satisfy $\|\e\bs{r}\|_1 \le \tau < - \frac{1}{2}\rmax$.
    \autoref{lm:errors-reduced-costs} ensures that $\| \e\bs{r} \|_1 = O(\epsilon)$;
    therefore, for any $\tau > 0$ there exists a value $\epsilon$ such that $\|\e\bs{r}\|_1 \le \tau$.
    Taking the minimum of such $\epsilon$'s across all bases makes the statement always hold.
    \qed
\end{proof}

\subsection{Proof of~\autoref{thm:correct-algorithm} - Unboundedness determination}
\label{pr:correct-algorithm-unboundedness-determination}
\begin{proof}
    \label{lm:unboundedness-determination}
    Let $\hbs{d}$ be the entering variable coefficient vector computed at iteration $i$ of the algorithm given tolerance $\tau$ and some precision with unit roundoff $\epsilon$,
    and let $\bs{d}$ be the same value computed by the algorithm with $\tau = 0$ and $\epsilon = 0$.
    Given that $\hbs{d} = \bs{d} + \e\bs{d}$, we want to prove that $d_j + \e d_j - \tau \le 0$ if and only if $d_j \le 0$, with $1 \le j \le m$.
    If $\|\e \bs{d}\|_1 \le \tau$, then $d_i \le 0$ implies $d_i + \e d_i - \tau \le 0$.
    Additionally, if $d_i > 0$ implies $d_i > 2\tau$, then $d_i > 0$ implies $d_i > \tau - \e d_i$, proving the reverse implication.

    Since $\bs{d} = \B^{-1}\A_{:,j}$, $d_j$ is a deterministic function of $\A$ and the basis $\B$, of which there are finitely many.
    We can therefore identify a minimum positive value of $d_j$ across all bases, which we will call $\dmin$;
    if $d_j$ is always non-positive, we set $\dmin$ to $\infty$.
    It follows that $0 < \tau < \frac{1}{2}\dmin$ ensures that $d_j > 0$ implies $d_j > 2\tau$ for all bases and $j$.
    Hence, we need to satisfy $\|\e\bs{d}\|_1 \le \tau < \frac{1}{2}\dmin$.
    \autoref{lm:errors-linear-systems} ensures that $\| \e\bs{d} \|_1 = O(\epsilon)$;
    therefore, for any $\tau > 0$ there exists a value $\epsilon$ such that $\|\e\bs{d}\|_1 \le \tau$.
    Taking the minimum of such $\epsilon$'s across all bases makes the statement always hold.
    \qed
\end{proof}

\subsection{Proof of~\autoref{thm:correct-algorithm} - Leaving variable choice}
\label{pr:correct-algorithm-leaving-variable-choice}
\begin{proof}
    If execution reaches this point, the preceding argument allows us to replace the inequalities $\hat{d}_j > \tau$ with $d_j > 0$ for all $1 \le j \le m$.
    Moreover, we are guaranteed that at least one such value exists, otherwise the procedure would have terminated.

    Let $\hbs{u}$ be the candidate update vector and $\hbs{d}$ the entering variable coefficient computed at iteration $i$ of the algorithm given tolerance $\tau$ and some precision with unit roundoff $\epsilon$,
    and let $\bs{u}$ and $\bs{d}$ be the corresponding values computed by the algorithm with $\tau = 0$ and $\epsilon = 0$.
    Given that $\hbs{u} = \bs{u} + \e\bs{u}$ and $\hbs{d} = \bs{d} + \e\bs{d}$, we want to prove that
    \begin{equation*}
        u_j + \e u_j \le \min(u_j + \e u_j : d_j > 0) + \tau \iff u_j = \min(u_j : d_j > 0) .
    \end{equation*}

    Consider the sequence $\{\alpha_1, \alpha_2, \dots, \alpha_S\}$ of distinct values of $u_j$ where $d_j > 0$, arranged in ascending order.
    Either $S = 1$, in which case all candidates have the minimum value, or $\alpha_2 - \alpha_1 > 0$.
    Let $\beta \coloneqq \infty$ in the first case and $\beta \coloneqq \alpha_2 - \alpha_1$ in the second.
    In either case,
    \begin{equation}
        \label{eq:correct-leaving-variable}
        2\| \e \bs{u} \|_1 \le \tau < \beta - 2 \|\e \bs{u} \|_1
    \end{equation}
    is sufficient to ensure the correct classification of all components.
    This is because, since $\alpha_1 = \min(u_j : d_j > 0)$, we have
    \begin{equation*}
        \alpha_1 - \|\e \bs{u} \|_1 \le \min(u_j + \e u_j : d_j > 0) \le \alpha_1 + \|\e \bs{u} \|_1 .
    \end{equation*}
    Note that, if $u_j = \alpha_1$, then $u_j + \e u_j \le \alpha_1 + \|\e \bs{u} \|_1$.
    Hence, by combining the left-hand sides we get
    \begin{equation*}
        u_j + \e u_j \le \alpha_1 + \| \e \bs{u} \|_1 \le \min(u_j + \e u_j : d_j > 0) + \tau
    \end{equation*}
    correctly classifying the component as $\alpha_1$.
    For the other possibility, suppose $u_j \ge \alpha_2$.
    Noting that $u_j + \e u_j \ge \alpha_2 - \|\e \bs{u} \|_1 = \alpha_1 + \beta - \|\e \bs{u} \|_1$
    and using the right-hand sides we get,
    \begin{equation*}
        \min(u_j + \e u_j : d_j > 0) + \tau < \alpha_1 + \beta - \|\e \bs{u} \|_1 \le u_j + \e u_j ,
    \end{equation*}
    which ensures that the component is not classified as $\alpha_1$.
    Because $u_j = \zbi[j] / d_j = (\B^{-1}\bs{b})_j / (\B^{-1}\bs{A}_{:,e})_j$, where $d_j > 0$ and $u_j = 0$ otherwise,
    we see that $u$, and, by extension, $\beta$, are determined by $\bs{A}, \bs{b}$ and $\B$.

    Let $\umin$ be equal to the smallest positive value of $\beta$ produced in this way across all bases
    with more than one distinct value of $u_j$ where $d_j > 0$ and $\umin = \infty$ if there is only one such value.
    Suppose that we have found some value of $\tau$ satisfying $0 < \tau < \umin$, giving $\tau < \beta$ for every possible $\beta$,
    ensuring that $\umin > 0$.
    Then $\|\e \bs{u} \|_1 < \min(\tau, \umin - \tau)/2$, which is equivalent to $2\| \e \bs{u} \|_1 < \tau < \umin - 2\| \e \bs{u} \|_1$,
    ensures that~\eqref{eq:correct-leaving-variable} holds regardless of the basis.
    Finally, \autoref{lm:errors-update} ensures that, for any given basis, $\| \e \bs{u}\|_1 = O(\epsilon)$ and for any $0 < \tau < \umin$ there exists a value $\epsilon$ such that $\| \e \bs{u} \|_1 < \min(\tau, \umin)/2$.
    Taking the minimum of such $\epsilon$'s across all bases makes the statement always hold.
    \qed
\end{proof}

\subsection{Proof of~\autoref{thm:correct-delta-complete}}
\begin{proof}
    We consider each of the three possible outputs of the algorithm.

    \paragraph{\textbf{Case: Infeasible}}
    If the algorithm returns \underline{infeasible} at~\autoref{line:delta-complete-simplex:infeasible}, then we have found an exactly primal and dual feasible solution to the auxiliary feasibility \gls{lp} problem, defined in lines~\ref{line:delta-complete-simplex:feasibility-lp-start}-\ref{line:delta-complete-simplex:feasibility-lp-end}.
    Since the objective function is the sum of infeasibilities of the original \gls{lp}, and it is minimized, its value being strictly positive implies that the original \gls{lp} is infeasible.

    \paragraph{\textbf{Case: Unbounded}}
    If the algorithm returns \underline{unbounded} at~\autoref{line:delta-complete-simplex:unbounded}
    then there exists an exactly primal feasible basis $\base$ for $\A$ such that the reduced cost vector contains a negative value corresponding to a non-basic variable $e$ that could improve the objective value.
    Since the vector $\B^{-1}\A_{:,e}$ is entirely non-positive, the variable value can be increased arbitrarily without breaking primal feasibility.

    \paragraph{\textbf{Case: $\delta$-optimal}}
    If the algorithm returns \underline{$\delta$-optimal} at~\autoref{line:delta-complete-simplex:delta}, then there exists an exactly primal feasible basic solution $\x$ (primal feasibility being ensured by $\xb = \B^{-1}\bs{b} \ge 0$)
    with primal objective function value $\omega_{U} = \bs{c}^{T}_{\base}\xb$
    and an exactly dual feasible solution $\y$ (dual feasibility being ensured by $\bs{r} = \bs{c} - \A^{T}\y \ge 0$)
    with dual objective function value $\omega_{L} = \bs{b}^T \y$ such that $\omega_{U} - \omega_{L} \le \delta$, which satisfies the definition
    of $\delta$-optimality from~\autoref{def:delta-complete} ($\omega_{U} \ge \omega_{L}$ by weak duality).
    \qed
\end{proof}

\fi

\end{document}